\newtheorem{thm}{Theorem}[section]
\newtheorem{lem}[thm]{Lemma}
\newtheorem{prop}[thm]{Proposition}
\newtheorem{conj}[thm]{Conjecture}
\theoremstyle{definition}
\theoremstyle{remark}
\newtheorem*{remark}{Remark}
\numberwithin{equation}{section}
\DeclareMathOperator{\Cr}{Cr}
\begin{document}

%%
%% The title of the paper goes here.  Edit to your title.
%%

\title{A Curved Brunn-Minkowski Inequality for the Symmetric Group}
%\keywords{Random tilings, random matrices.}
%\subjclass[2010]{test}
%%
%% Now edit the following to give your name and address:
%% 
\author{Weerachai Neeranartvong}
\address{Department of Mathematics, 
Massachusetts Institute of Technology, USA}
\email{weerachai@mit.edu}

\author{Jonathan Novak}
\address{Department of Mathematics, 
Massachusetts Institute of Technology, USA}
\email{jnovak@math.mit.edu}

\author{Nat Sothanaphan}
\address{Department of Mathematics, 
Massachusetts Institute of Technology, USA}
\email{nsothana@mit.edu}

%%%
%%% The following is for the abstract.  The abstract is optional and
%%% if not used just delete, or comment out, the following.
%%%

%%
%%  LaTeX will not make the title for the paper unless told to do so.
%%  This is done by uncommenting the following.
%%

 \maketitle
%\tableofcontents
%%
%% LaTeX can automatically make a table of contents.  This is done by
%% uncommenting the following:
%%

%%
%%  To enter text is easy.  Just type it.  A blank line starts a new
%%  paragraph. 
%%

\begin{abstract}
In this paper, we construct an injection
$A \times B \rightarrow M \times M$ from the product
of any two nonempty subsets of the symmetric group into
the square of their midpoint set, where the metric is that
corresponding to the conjugacy
class of transpositions.  If $A$ and $B$ are disjoint,
our construction allows to inject two copies of $A \times B$ into
$M \times M$.
These injections imply a positively curved Brunn-Minkowski inequality
for the symmetric group analogous to that obtained by 
Ollivier and Villani for the hypercube.  However,
while Ollivier and Villani's inequality is optimal, we believe
that the curvature term in our inequality can be
improved.  We identify a hypothetical concentration inequality
in the symmetric group and prove that it yields an optimally
curved Brunn-Minkowski inequality.
\end{abstract}

\section{Introduction}
The classical Brunn-Minkowski inequality
may be formulated as follows: given two compact nonempty 
sets $A,B \subset \mathbb{R}^n$, one has

	\begin{equation*}
		\label{ineq:ClassicalBM}
		\log |M_t| \geq (1-t) \log |A| + t \log |B|
	\end{equation*}
	
\noindent
for any $0 \leq t \leq 1$, where 

	\begin{equation*}
		M_t = \{ (1-t)a + tb : a \in A,\ b \in B\}
	\end{equation*} 
	
\noindent
is the set of $t$-midpoints of $A$ and $B$, and $|\cdot|$ is 
Lebesgue measure.  If $\mathbb{R}^n$ is replaced
by a smooth complete Riemannian manifold
with positive Ricci curvature bounded below by $K > 0$, the 
Brunn-Minkowski inequality can be strengthened to

	\begin{equation}
		\label{eqn:CurvedBM}
		\log |M_t| \geq (1-t) \log |A| + t \log |B| + 
		\frac{K}{2}t(1-t)\mathrm{d}(A,B)^2, 
	\end{equation}
	
\noindent
where $\mathrm{d}$ is the Hausdorff distance and
$M_t$ the set of points of the form $\gamma(t)$ with
$\gamma$ a geodesic in $X$ such that 
$\gamma(0) \in A$ and $\gamma(1) \in B$,
see \cite{OV} and references therein.  The heuristic here
is that midpoint sets are larger in positively curved
space than in flat space, and the degree of
this distortion is controlled by Ricci curvature.

Y. Ollivier and C. Villani \cite{OV} have shown
that a curved Brunn-Minkowski inequality analogous
to \eqref{eqn:CurvedBM} holds for the discrete hypercube
$\mathbb{Z}_2^N$ equipped with the Hamming distance.
While the definition of $t$-midpoints
in discrete space is somewhat messy,
in the case $t=\frac{1}{2}$, at least, it is 
reasonable to define the midpoint set $M=M_{1/2}$ of 
$A,B \subseteq \mathbb{Z}_2^N$ to
be the collection of $m \in \mathbb{Z}_2^N$ which satisfy
$\mathrm{d}(a,m) + \mathrm{d}(m,b) = \mathrm{d}(a,b)$ 
and $\mathrm{d}(a,m) = 
\mathrm{d}(m,b) + \varepsilon$, with $\varepsilon \in \{-1,0,1\}$,
for some $(a,b) \in A \times B$.
Adopting these definitions, Ollivier and Villani proved 
the following curved Brunn-Minkowski inequality
for the hypercube \cite[Theorem 1]{OV}.

	\begin{thm}
	\label{thm:OV}
		For any nonempty sets $A,B \subseteq \mathbb{Z}_2^N$,
		
			\begin{equation*}
				\log |M| \geq \frac{1}{2} \log |A| + 
				\frac{1}{2} \log |B| + \frac{K}{8} 
				\mathrm{d}(A,B)^2,
			\end{equation*}
			
		\noindent
		where $K=\frac{1}{2N}$.
	\end{thm}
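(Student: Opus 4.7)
The plan is to use a probabilistic/entropy argument, viewing each midpoint as the value of a random interpolation and extracting the curvature term from concentration of measure on the hypercube. Draw $(a,b)$ uniformly from $A \times B$ and, conditionally on $(a,b)$, draw $m$ uniformly from the midpoints of $(a,b)$. Setting $d = d(a,b)$ and $S_{a,b} = \{i : a_i \neq b_i\}$, the midpoints are parameterized by subsets $U \subseteq S_{a,b}$ of size $\lfloor d/2\rfloor$ (plus $\lceil d/2\rceil$ when $d$ is odd), so the number $N_{a,b}$ of midpoints of $(a,b)$ equals $\binom{d}{\lfloor d/2\rfloor}$ (doubled when $d$ is odd). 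Since $m \in M$ almost surely, $H(m) \leq \log|M|$.

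Apply the chain rule $H(a,b,m) = H(a,b) + H(m \mid a,b) = H(m) + H(a,b \mid m)$, which gives
\[ \log|M| \;\geq\; \log(|A||B|) + \mathbb{E}[\log N_{a,b}] - H(a,b \mid m). \]
The theorem will then follow from the conditional entropy estimate
\[ H(a,b \mid m) \;\leq\; \tfrac{1}{2}\log(|A||B|) + \mathbb{E}[\log N_{a,b}] - \tfrac{K}{8}\mathbb{E}\bigl[d(a,b)^2\bigr], \]
since combining the two displays and using $\mathbb{E}[d(a,b)^2] \geq d(A,B)^2$ (every pair in $A \times B$ is at distance at least $d(A,B)$) immediately yields the claim with $K = 1/(2N)$.

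For the conditional estimate, I would work on the fiber of pairs $(a,b)$ producing a given midpoint $m$: this fiber carries a reflection symmetry swapping $a \leftrightarrow b$, and the conditional distribution of $(a,b)$ given $m$ has a product-like structure over the coordinates in $S_{a,b}$, since the halving set $U$ can be chosen independently per coordinate up to the cardinality constraint. Combining this structure with the modified log-Sobolev inequality for the single-flip random walk on $\mathbb{Z}_2^N$, whose constant is of order $1/N$, should yield the desired bound.

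The main obstacle is precisely this conditional entropy estimate with the correct constant: it is essentially equivalent to a sharp discrete concentration inequality on $\mathbb{Z}_2^N$, and tracking the exact factor $K = 1/(2N)$ requires either a careful reflection-coupling argument on the fiber over $m$, or an explicit multi-injection $A \times B \times \Omega_{a,b} \hookrightarrow M \times M$ with $|\Omega_{a,b}| \geq \exp(Kd(a,b)^2/4)$ for each $(a,b)$. Constructing such an $\Omega_{a,b}$ combinatorially is itself a substantial discrete isoperimetric problem, and reflects the fact that the curvature term in the theorem, unlike the flat piece, cannot be produced by a mere injection count but encodes a genuine concentration phenomenon intrinsic to the hypercube.
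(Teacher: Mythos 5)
There is a genuine gap, and you have in fact named it yourself. Your entropy setup is sound as far as it goes: $\log|M|\geq H(m)$, the chain rule identity, and the reduction to the bound
\begin{equation*}
H(a,b\mid m)\;\leq\;\tfrac{1}{2}\log(|A||B|)+\mathbb{E}[\log N_{a,b}]-\tfrac{K}{8}\,\mathbb{E}\bigl[\mathrm{d}(a,b)^2\bigr]
\end{equation*}
are all correct. But this conditional entropy estimate is at least as strong as the theorem itself: it already contains the factor $\tfrac12$ (which is the entire content of the flat Brunn--Minkowski inequality and requires a real argument --- in this paper it comes from the injection $A\times B\to M\times M$ built from dual pairs of crossovers) \emph{and} the curvature term. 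Neither part is proved. The sketched ingredients --- a ``reflection symmetry'' on the fiber over $m$ (note that reflection through $m$ is the identity in $\mathbb{Z}_2^N$, so the relevant involution is really the duality $U\mapsto S_{a,b}\setminus U$ on halving sets), a ``product-like structure,'' and the modified log-Sobolev inequality for the single-flip walk --- are not assembled into an argument, and the last one is not even the right tool: the concentration you need lives on the middle slice $\binom{S_{a,b}}{\lfloor d/2\rfloor}$ relative to complementation, with exponent of order $r^2/\mathrm{d}(a,b)$, not $r^2/N$; the constant $K=\tfrac{1}{2N}$ only emerges after bounding $\mathrm{d}(a,b)$ by the diameter $N$.

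For context: this statement is quoted from Ollivier and Villani and is not proved in the paper at all. Their actual proof is the pattern that Theorem \ref{thm:ConditionalOptimal} of this paper lifts to $S(n)$: enrich $A\times B$ by \emph{all} crossovers, map $(a,b,c)\mapsto\Phi_c(a,b)$ into $M\times M$, observe that the fibre over $(x,y)$ is a set $D(x,y)$ of crossovers with $\mathrm{d}(D,D^\vee)\geq\mathrm{d}(A,B)$, and then invoke a concentration inequality on the crossover set (their Corollary 6) to bound $|D(x,y)|$. Your closing remark that the missing estimate ``is essentially equivalent to a sharp discrete concentration inequality'' is exactly right --- and that inequality is the substantive content of the theorem, so a proposal that assumes it has not proved the result.
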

	
\noindent
Ollivier and Villani moreover verify that 
the dependence of $K$ on $N$ in their result is optimal.
As discussed in \cite{OV}, this result supports the 
statement that the ``discrete Ricci curvature'' of 
$\mathbb{Z}_2^N$ is of order $N^{-1}$.  

For any $n \geq 2N$, there is an injective 
group homomorphism

    \begin{equation*}
        \mathbb{Z}_2^N \longrightarrow S(n)
    \end{equation*}
    
\noindent 
from the $N$-dimensional hypercube into the
symmetric group of rank $n$ determined by

    \begin{equation*}
    \begin{split}
         e_1 &\mapsto (1 \mapsto 2) \\ 
         e_2 &\mapsto (3 \mapsto 4) \\
         &\vdots \\
         e_N &\mapsto (2N-1 \mapsto 2N),
        \end{split}
    \end{equation*}
    
\noindent
where $e_i \in \mathbb{Z}_2^N$ is the bitstring in which
all bits are zero except the $i$th bit, and 
$(2i-1 \mapsto 2i) \in S(n)$ is the transposition
which swaps $2i$ and $2i-1$.  
If one equips $S(n)$ with the metric induced
by the word norm corresponding to the conjugacy class
of transpositions, this injection is an isometric embedding
of $\mathbb{Z}_2^N$ in $S(n)$.  It is thus natural
to seek an extension of Theorem \ref{thm:OV} to the symmetric
group, viewed as a metric space in this way.  
In this paper, we prove the following 
curved Brunn-Minkowski inequality
for $S(n)$.

	\begin{thm}
		\label{thm:Main}
		For any nonempty sets $A,B \subseteq S(n)$,
		
			\begin{equation*}
				\log |M| \geq \frac{1}{2} 
				\log |A| + \frac{1}{2} \log |B| + \frac{K}{8} 
				\mathrm{d}(A,B)^2,
			\end{equation*}
			
		\noindent
		where $K=\frac{4 \log 2}{(n-1)^2}$.
	\end{thm}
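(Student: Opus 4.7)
The plan, following the outline in the abstract, is to reduce Theorem \ref{thm:Main} to two combinatorial injection statements and then to construct the injections by means of minimal transposition factorizations in $S(n)$.

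\emph{Reduction.} The theorem follows from: (a) for any nonempty $A, B \subseteq S(n)$, an injection $\Phi : A \times B \hookrightarrow M \times M$; and (b) when $A \cap B = \emptyset$, a second injection $\Phi' : A \times B \hookrightarrow M \times M$ whose image is disjoint from that of $\Phi$. Statement (a) gives $|M|^{2} \geq |A||B|$, which is Theorem \ref{thm:Main} in the case $d(A,B)=0$. When $A \cap B = \emptyset$, (a) and (b) together give $|M|^{2} \geq 2|A||B|$; since the transposition diameter of $S(n)$ is $n-1$, one has $\frac{K}{4}d(A,B)^{2} \leq \frac{K(n-1)^{2}}{4} = \log 2$, and so this doubling alone implies $\log|M|^{2} \geq \log|A| + \log|B| + \frac{K}{4}d(A,B)^{2}$, i.e.\ Theorem \ref{thm:Main}. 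In particular, no iteration or interpolation of the doubling is required.

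\emph{Construction of $\Phi$.} Given $(a,b) \in A \times B$, set $\sigma := a^{-1}b$ and $k := d(a,b) = n - c(\sigma)$, where $c(\sigma)$ denotes the number of cycles of $\sigma$. Every minimal factorization $\sigma = \tau_{1}\cdots\tau_{k}$ produces a geodesic $a = m_{0},\ m_{1} = a\tau_{1},\ \ldots,\ m_{k} = b$, and the midpoints of $(a,b)$ are precisely the $m_{j}$ with $j \in \{\lfloor k/2 \rfloor,\lceil k/2 \rceil\}$. I would fix a canonical minimal factorization rule $\sigma \mapsto (\tau_{1},\ldots,\tau_{k})$ — for instance, order the cycles of $\sigma$ lexicographically and within each cycle use the factorization induced by a canonical labeled rooted-tree encoding (Cayley trees) — and set $\Phi(a,b) := (m_{\lfloor k/2\rfloor},\ m_{\lceil k/2 \rceil})$. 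Injectivity should follow because the pair $(m_{-},m_{+})$ determines the middle transposition $m_{-}^{-1}m_{+}$ (trivial when $k$ is even), and, combined with the canonical rule applied to $\sigma$, it determines the half-distance $\lfloor k/2\rfloor$ and hence the entire factorization of $\sigma$, allowing one to recover $a = m_{-}(\tau_{1}\cdots\tau_{\lfloor k/2\rfloor})^{-1}$ and $b = a\sigma$.

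\emph{Doubling refinement and main obstacle.} For (b), when $A \cap B = \emptyset$ every pair has $k \geq 1$, so the canonical factorization is nonempty. I would define $\Phi'(a,b)$ as a natural involutive perturbation of $\Phi(a,b)$ — for example, reading the canonical factorization from right to left starting at $b$, or swapping the roles of the ``first half'' and ``second half'' of the factorization — and arrange that the images of $\Phi$ and $\Phi'$ lie in complementary regions of $M \times M$ cut out by an explicit parity invariant, such as the parity of $d(a,m_{-})$ or the sign of $m_{-}^{-1}m_{+}$ composed with a fixed reference transposition. The main obstacle is precisely this step: choosing the canonical factorization rule and its flipped variant so that both $\Phi$ and $\Phi'$ are injective and their images are demonstrably disjoint. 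Unlike the hypercube — where the $N$ commuting transpositions make the midpoint encoding essentially unique via symmetric differences of bitstrings — minimal factorizations in $S(n)$ are highly non-unique (parametrized by Cayley trees and parking functions) and do not commute, so the canonical rule must be chosen with care; once it is, the reduction above converts the injections into Theorem \ref{thm:Main} without further work.
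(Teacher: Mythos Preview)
Your reduction is exactly right and matches the paper: the flat case plus, for $A\cap B=\emptyset$, an injection of two copies of $A\times B$ into $M\times M$, combined with the diameter bound $\mathrm{d}(A,B)\le n-1$, yields the stated $K$.

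The construction of the injections, however, has a genuine gap --- in fact your specific $\Phi$ cannot be injective. For even $k$ you set $\Phi(a,b)=(m_{k/2},m_{k/2})$, so the image lies on the diagonal of $M\times M$. Take $A=\{e\}$ and $B=L_2$ (all permutations at distance $2$ from $e$). Every midpoint is a transposition, so $M=L_1$, and your $\Phi$ would have to inject $L_2$ into $L_1$; but $|L_2|>|L_1|$ already for $n\ge 4$. More generally, $|L_{2j}|>|L_j|$ for small $j$, so no choice of canonical factorization can rescue this. Your injectivity argument is also circular: you invoke ``the canonical rule applied to $\sigma$'' to recover the half-distance and the factorization, but $\sigma=a^{-1}b$ is precisely what you are trying to reconstruct from $(m_-,m_+)$, and $m_-^{-1}m_+$ only tells you the parity of $k$, not $\sigma$.

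The paper avoids this by choosing the two midpoints \emph{far apart} rather than adjacent. It parameterizes $M(a,b)$ by a standard set of crossovers $\Cr(\mu)$ (midpoints of $e$ and the canonical permutation $p_\mu$, where $\mu$ is the ordered cycle type of $a^{-1}b$) via an isometry $c\mapsto\varphi_c(a,b)$, and then encodes $(a,b)$ as the pair $\Phi_c(a,b)=(\varphi_c(a,b),\varphi_{c^\vee}(a,b))$ with $c^\vee=c^{-1}p_\mu$. The crucial lemma is that $c^{-1}c^\vee$ has the \emph{same} ordered cycle type and cycle minima as $p_\mu$; consequently $x^{-1}y$ has the same ordered cycle type as $a^{-1}b$, so from $(x,y)$ alone one recovers $\mu$ and can apply a predetermined decoding crossover $\delta(c)\in\Cr(\mu)$ with $\Phi_{\delta(c)}\circ\Phi_c=\mathrm{id}$. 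This is what replaces your circular step. The doubling is then obtained not by a parity invariant but by a second key $\tilde c=\delta(\delta(c)^\vee)$, engineered so that $\delta(\tilde c)=\delta(c)^\vee$; a collision between the two copies would force $\varphi_{\delta(c)^\vee}(x,y)$ to lie in both $A$ and $B$, contradicting disjointness. Your ``read the factorization backwards'' heuristic is pointing in the right direction, but the actual mechanism is this algebraic duality on crossovers, and it requires the nontrivial fact (Lemma~\ref{lem:LemForDuality} / noncrossing partitions) that $c^{-2}p_\mu$ preserves the cycle structure of $p_\mu$.
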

	
The Brunn-Minkowski inequality presented in Theorem \ref{thm:Main}
is only slightly curved, and we believe that 
Theorem \ref{thm:Main} in fact holds
with $K=\frac{c}{n-1}$, $c$ a positive constant.  Although
we do not prove this,
we identify a hypothetical concentration inequality
in the symmetric group which generalizes the hypercube
concentration inequality of Ollivier and Villani
\cite[Corollary 6]{OV}, and demonstrate that  
it implies an optimally curved Brunn-Minkowski 
inequality for the symmetric group.

\subsubsection*{Acknowledgement}
We thank Prasad Tetali for introducing
us to the subject of discrete curvature, and suggesting
the problem of lifting the curved Brunn-Minkowski 
inequality from the hypercube to the symmetric group.
We are grateful to Professor Tetali
for communicating to us a set of notes \cite{Tetali} on this topic
compiled by a group of researchers following
an AIM workshop on discrete curvature,
and for pointing out the papers \cite{EMT,KKRT}.  We 
thank Mike Lacroix for creating Figure \ref{fig:Cayley}.

\section{Symmetric Group Basics}
In this section we fix basic notation and terminology
concerning the symmetric group $S(n)$.  We
identify $S(n)$ with its right Cayley graph as generated
by the conjugacy class of transpositions.
Thus $a,b,c,\dots \in S(n)$ are the 
vertices of our graph, and $\{a,b\}$ is an edge
if and only if $a^{-1}b$ fixes all but two
points of $\{1,\dots,n\}$.  In this way $S(n)$
becomes a \emph{graded graph}: it decomposes
as the disjoint union

    \begin{equation*}
        S(n) = \bigsqcup_{r=0}^{n-1} L_r,
    \end{equation*}
    
\noindent
where $L_r$ is the set of permutations 
which factor into exactly $n-r$ disjoint cycles; each
$L_r$ is an independent set; and, finally, 
there exists an edge
between $L_r$ and $L_{r'}$ if and only if
$|r-r'|=1$.  Figure \ref{fig:Cayley} shows the
case $n=4$.  

\begin{figure}
	\includegraphics{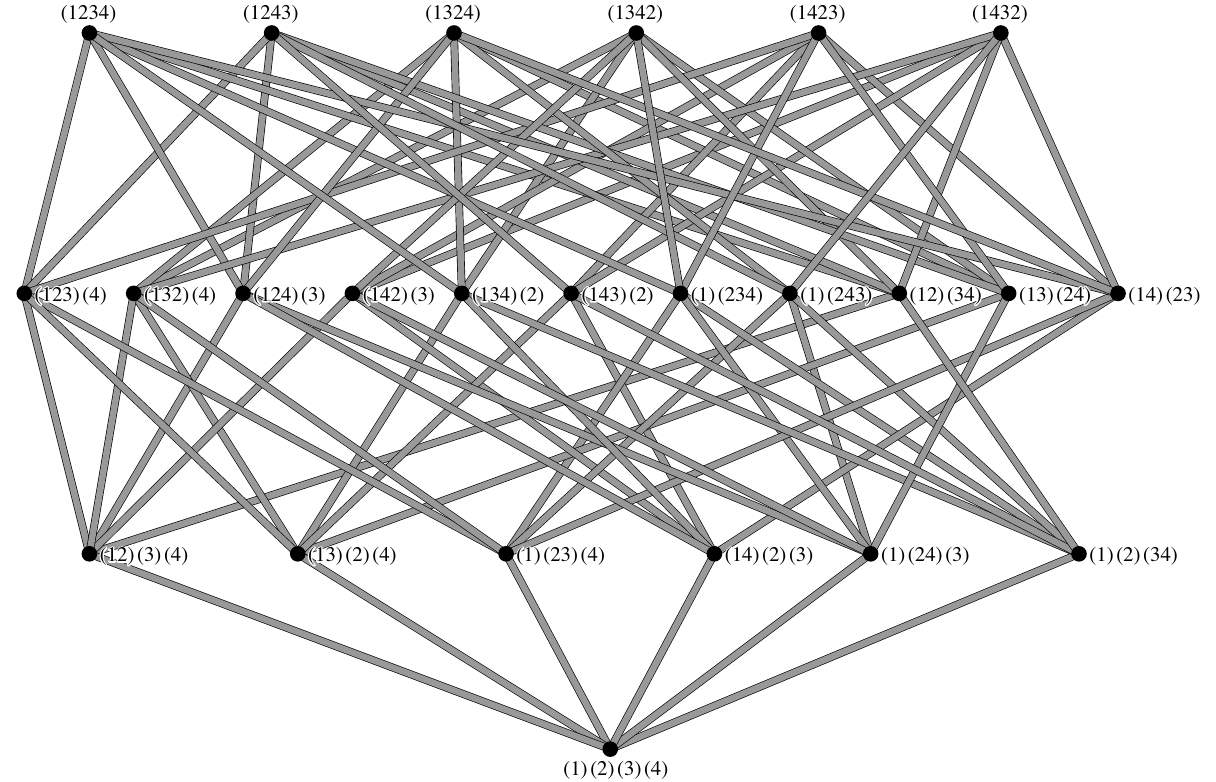}
	\caption{\label{fig:Cayley} The Cayley graph of $S(4)$.}
\end{figure}

Each level $L_r$ of $S(n)$ further decomposes 
as the disjoint union

    \begin{equation*}
        L_r = \bigsqcup_{\substack{\lambda \vdash n\\ 
        \ell(\lambda)=n-r} } C_\lambda,
    \end{equation*}
    
\noindent
where the union is over partitions $\lambda$ of $n$
with $n-r$ parts, and $C_\lambda$ is the set of permutations
with cycle type $\lambda$.  The sets $C_\lambda$ are
the conjugacy classes of $S(n)$.

In this paper we make use of a decomposition
of $S(n)$ which is finer than the usual decomposition
into conjugacy classes.  Given $p \in S(n)$, 
factor $p$ into disjoint cycles, 
and present each cycle so that its leftmost element
is its minimal element.  That is, each cycle of $p$
is presented in the form 

    \begin{equation*}
        (i_1 \mapsto i_2 \mapsto i_3 \mapsto \dots),
    \end{equation*}
    
\noindent
where $i_1 < \min\{i_2,i_3,\dots\}$.
Next, list the cycles of $p$ from left to right
in increasing order of their minimal elements.  Thus
$p$ is presented in the form

    \begin{equation*}
        p = (i_1 \mapsto i_2 \mapsto i_3 \mapsto \dots)
        (j_1 \mapsto j_2 \mapsto j_3 \mapsto \dots)
        (k_1 \mapsto k_2 \mapsto k_3 \mapsto \dots)
        \dots,
    \end{equation*}
    
\noindent
where $i_1 < j_1 < k_1 < \dots$.  We call this
the \emph{ordered cycle factorization} of $p$.
Figure \ref{fig:Cayley} displays the elements of $S(4)$ 
using their ordered cycle factorizations.
We refer to the vector

    \begin{equation*}
        (i_1,j_1,k_1,\dots)
    \end{equation*}

\noindent
as the \emph{sequence of cycle minima} of $p$.
The vector

    \begin{equation*}
      (\mu_1,\mu_2,\mu_3,\dots)
    \end{equation*}

\noindent
of cycle lengths

    \begin{equation*}
        p = 
        (\underbrace{i_1 \mapsto i_2 \mapsto i_3 
        \mapsto \dots}_{\text{length }\mu_1})
        (\underbrace{j_1 \mapsto j_2 \mapsto j_3 
        \mapsto \dots}_{\text{length }\mu_2})
        (\underbrace{k_1 \mapsto k_2 \mapsto k_3 
        \mapsto \dots}_{\text{length }\mu_3})
        \dots,
    \end{equation*}
    
\noindent
in the ordered cycle factorization of $p$
is a composition of $n$ which we call the
\emph{ordered cycle type} of $p$.  We denote
by $\ell(\mu)$ the number of parts of $\mu$,
so that $r=n-\ell(\mu)$ if $p \in L_r$.
Given two permutations $p,p'$ of the same ordered
cycle type, there is a \emph{unique} permutation 
$u$ which both conjugates $p$ into $p'$ and
transforms the sequence of cycle minima of $p$
into that of $p'$.

Given a composition
$\mu \vDash n$, we denote by $\vec{C}_\mu$
the set of all permutations
whose ordered cycle type is $\mu$.  Then each conjugacy
class $C_\lambda$ in $S(n)$ decomposes as the disjoint union

    \begin{equation*}
        C_\lambda = \bigsqcup_\mu \vec{C}_\mu,
    \end{equation*}
    
\noindent 
where $\mu$ ranges over all compositions obtained
by permuting the parts of $\lambda$.  For the 
symmetric group $S(4)$, the successive decompositions
we have discussed are

    \begin{equation*}
    \begin{split}
        S(4) &= L_0 \sqcup L_1 \sqcup L_2 \sqcup L_3 \\
        & = C_{(1,1,1,1)} \sqcup C_{(2,1,1)}
        \sqcup C_{(3,1)} \sqcup C_{(2,2)} \sqcup C_{(4)} \\
        & = \vec{C}_{(1,1,1,1)} \sqcup 
        \vec{C}_{(2,1,1)} \sqcup \vec{C}_{(1,2,1)} \sqcup
        \vec{C}_{(1,1,2)}
        \sqcup \vec{C}_{(3,1)} \sqcup \vec{C}_{(1,3)}
         \sqcup \vec{C}_{(2,2)} \sqcup \vec{C}_{(4)}.
    \end{split}
    \end{equation*}

\noindent
Each class $\vec{C}_\mu$ contains a canonical
permutation $p_\mu$, which acts by cyclically
permuting the first $\mu_1$ positive integers
in the canonical way, cyclically permuting the next
$\mu_2$ positive integers in the canonical way, 
and so on.  Given $p \in \vec{C}_\mu$, we denote by
$u_p \in S(n)$ the unique permutation which
both conjugates $p_\mu$ to $p$ and transforms
the sequence of cycle minima of $p_\mu$ into 
that of $p$.

We equip $S(n)$ with the graph theory 
distance $\mathrm{d}$.
Thus level $L_r$ in the Cayley graph coincides with the 
sphere of radius $r$ centred at the identity permutation
$e \in S(n)$.  The following properties of $\mathrm{d}$
are easily checked:

	\begin{equation*}
		\begin{split}
		\mathrm{d}(a,b) &=\mathrm{d}(pap^{-1},pbp^{-1})\\ 
		&=\mathrm{d}(ab^{-1},e) \\
		&=\mathrm{d}(e,a^{-1}b) \\
		&=\mathrm{d}(ap,bp)\\
		&= \mathrm{d}(pa,pb).
		\end{split}	
	\end{equation*}
	
\noindent
In particular, the diameter of the Cayley graph
is

    \begin{equation*}
        \label{eqn:Diameter}
        \max \{\mathrm{d}(a,b) : a,b \in S(n)\}
        = \max \{\mathrm{d}(e,p) : p \in S(n)\}
        =n-1.
    \end{equation*}
    
We have already mentioned the fact that the set of
permutations which lie on a geodesic path from 
the identity permutation $e$ to an
involution $v$ is isometrically isomorphic to 
a hypercube whose dimension
is half the size of the support of $v$.  We will
also make use of the fact that a permutation lies
on a geodesic path from $e$ to a forward cycle $f$
if and only if it is a product of forward cycles
which together induce a noncrossing partition 
of the support of $f$.
A proof of this folklore result may be found in
\cite[Lecture 23]{NS}.

\section{Midpoint Calculus}
In this section, we generalize the encoding/decoding
formalism of Ollivier and Villani from the hypercube to the 
symmetric group.  Where possible, we try to be consistent
with the notation and terminology of \cite{OV}.

\subsection{Crossovers}
Let $(a,b) \in S(n) \times S(n)$ be a pair of permutations,
and let $M(a,b)$ be the corresponding midpoint set.  
Our first observation is that $M(a,b)$ is
the isometric image of a ``standard'' set of midpoints.
More precisely, let $\mu$ be the ordered cycle type of 
$a^{-1}b$, and let $\Cr(\mu)$ denote the set of midpoints
of $e$, the identity permutation, and $p_\mu$,
the canonical permutation of ordered cycle type
$\mu$.  Adopting the terminology of
\cite{OV}, we call the elements of $\Cr(\mu)$
\emph{crossovers}, or $\mu$-\emph{crossovers}
to be precise.  Consider the function

    \begin{equation*}
        S(n) \longrightarrow S(n)
    \end{equation*}
    
\noindent
defined by

	\begin{equation*}
		 x \mapsto a u_{a^{-1}b} x u_{a^{-1}b}^{-1},
	\end{equation*}
	
\noindent
where $u_{a^{-1}b}$ is the unique permutation which
conjugates $p_\mu$ to $a^{-1}b$ and transforms the 
sequence of cycle minima of $p_\mu$ into that of $a^{-1}b$.
This function is an isometry,
being composed of rotation by $u_{a^{-1}b}$ followed
by translation by $a$.  Moreover, under this
mapping

	\begin{equation*}
		e \mapsto a, \quad p_\mu \mapsto b.
	\end{equation*}
	
\noindent
Thus the mapping restricts to a bijection

    \begin{equation*}
     \Cr(\mu) \longrightarrow M(a,b).
    \end{equation*}
    
\noindent
We write 

    \begin{equation*}
     \varphi_c(a,b) = au_{a^{-1}b}cu_{a^{-1}b}^{-1},
    \end{equation*}

\noindent
and view

    \begin{equation*}
        \varphi_c(a,b), \quad c \in \Cr(\mu)
    \end{equation*}
    
\noindent
as a parameterization of the locus $M(a,b)$
by a ``standard'' set of midpoints.
Following the terminology of \cite{OV}, we
call $\varphi_c(a,b) \in M(a,b)$ the midpoint of
$a$ and $b$ \emph{encoded} by the crossover
$c \in \Cr(\mu)$.

\subsection{Duality}
There is a natural geometric operation 
on crossovers: we view a crossover as the lower
half of a geodesic path from the identity to the
canonical permutation with a given ordered cycle type,
and map it to the corresponding upper half.  
More precisely, given $c \in \Cr(\mu)$, its 
\emph{dual} $c^\vee$ is defined by

	\begin{equation*}
		c^\vee := c^{-1}p_\mu.
	\end{equation*}

We now establish some technical properties of
the operation $c \mapsto c^\vee$ which will be needed below.
First is the basic but important closure property.

    \begin{prop}
    \label{prop:Closed}
        $\Cr(\mu)$ is closed under taking duals.
    \end{prop}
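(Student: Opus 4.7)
The plan is to verify directly that when $c$ satisfies the midpoint conditions defining $\Cr(\mu) = M(e, p_\mu)$, so does $c^\vee = c^{-1} p_\mu$. By definition, this amounts to checking that
\[
\mathrm{d}(e, c^\vee) + \mathrm{d}(c^\vee, p_\mu) = \mathrm{d}(e, p_\mu),
\]
together with the balance condition $\mathrm{d}(e, c^\vee) = \mathrm{d}(c^\vee, p_\mu) + \varepsilon'$ for some $\varepsilon' \in \{-1, 0, 1\}$.

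First I would rewrite the two distances of interest in terms of $\mathrm{d}(e, c)$ and $\mathrm{d}(c, p_\mu)$. Using the bi-invariance identities listed at the end of Section 2, right translation by $p_\mu^{-1}$ gives $\mathrm{d}(e, c^{-1} p_\mu) = \mathrm{d}(p_\mu^{-1}, c^{-1})$, and a further application of $\mathrm{d}(a,b) = \mathrm{d}(ab^{-1}, e)$ yields $\mathrm{d}(p_\mu^{-1}, c^{-1}) = \mathrm{d}(c, p_\mu)$; hence $\mathrm{d}(e, c^\vee) = \mathrm{d}(c, p_\mu)$. Similarly, $\mathrm{d}(c^\vee, p_\mu) = \mathrm{d}(c^{-1} p_\mu, p_\mu) = \mathrm{d}(c^{-1}, e) = \mathrm{d}(e, c)$; the last step uses the identity $\mathrm{d}(e, c^{-1}) = \mathrm{d}(e, c)$, which follows by applying the bi-invariance list with $a = e$ and $b = c^{-1}$ and ultimately reflects that the generating transpositions are involutions.

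With these two identities, the midpoint conditions for $c^\vee$ fall out of those for $c$: the geodesic equation reads $\mathrm{d}(e, c^\vee) + \mathrm{d}(c^\vee, p_\mu) = \mathrm{d}(c, p_\mu) + \mathrm{d}(e, c) = \mathrm{d}(e, p_\mu)$, and the balance condition becomes $\mathrm{d}(e, c^\vee) - \mathrm{d}(c^\vee, p_\mu) = \mathrm{d}(c, p_\mu) - \mathrm{d}(e, c) = -\varepsilon \in \{-1, 0, 1\}$. Thus duality simply flips the sign of the balance parameter, and $c^\vee \in \Cr(\mu)$. There is no real obstacle here: once the bi-invariance and inversion observations are in place, the operation $c \mapsto c^\vee$ visibly swaps the roles of the lower and upper halves of a geodesic from $e$ to $p_\mu$, and closure is automatic.
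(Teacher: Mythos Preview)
Your proof is correct and follows essentially the same route as the paper's: both arguments reduce to the identities $\mathrm{d}(e,c^\vee)=\mathrm{d}(c,p_\mu)$ and $\mathrm{d}(c^\vee,p_\mu)=\mathrm{d}(e,c)$, obtained from bi-invariance, and then read off the geodesic and balance conditions. The only cosmetic difference is that the paper invokes $\mathrm{d}(e,a^{-1}b)=\mathrm{d}(a,b)$ directly for the first identity, whereas you factor it through a right translation and an inversion step; your observation that the balance parameter flips sign is also slightly more explicit than the paper's phrasing.
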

    
    \begin{proof}
    Let $c$ be a midpoint of $e$ and $p_\mu$;
    we have to check that $c^\vee$ is a midpoint
    of $e$ and $p_\mu$.  We have
    
    \begin{equation*}
        \mathrm{d}(e,c^\vee) = \mathrm{d}(e,c^{-1}p_\mu)
        =\mathrm{d}(c,p_\mu)
    \end{equation*}
    
    \noindent
    and
    
    \begin{equation*}
        \mathrm{d}(c^\vee,p_\mu) = \mathrm{d}(c^{-1}p_\mu,p_\mu)
        =\mathrm{d}(c^{-1},e)
        =\mathrm{d}(e,c).
    \end{equation*}
    
    \noindent
    Thus
    
    \begin{equation*}
        \mathrm{d}(e,c^\vee)+\mathrm{d}(c^\vee,p_\mu)
        =\mathrm{d}(e,c) + \mathrm{d}(c,p_\mu)
        =\mathrm{d}(e,p_\mu),
    \end{equation*}
    
    \noindent
    and
    
    \begin{equation*}
        \mathrm{d}(e,c^\vee) - \mathrm{d}(c^\vee,p_\mu)
        = \mathrm{d}(c,p_\mu) - \mathrm{d}(e,c)
        =\varepsilon
    \end{equation*}
    
    \noindent
    with $\varepsilon \in \{-1,0,1\}$, as required.
    \end{proof}
    
    Note that while the map $\Cr(\mu) \rightarrow \Cr(\mu)$
    defined by $c \mapsto c^\vee$ is bijective, it is 
    not involutive: the dual of the dual of $c$ is
    $c$ conjugated by $p_\mu^{-1}$.  For future use, we
    extend the duality operation from points to sets:
    given $C \subseteq \Cr(\mu)$, we define
    
    \begin{equation*}
        C^\vee := \{ c^\vee : c \in C\}.
    \end{equation*}
 	
 	Next is the following important property of crossover
 	duals.
 	
 	\begin{lem} \label{lem:LemForDuality}
    If $c \in \Cr(\mu)$, then $c^{-1}c^\vee$
    has both the same ordered cycle type and
    the same sequence of cycle minima as $p_\mu$.
    \end{lem}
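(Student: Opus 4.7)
The plan is to reduce the claim to the case where $p_\mu$ is a single forward cycle and then analyze $c^{-1} c^\vee$ using the non-crossing partition description of $c$ and $c^\vee$, concluding via a spanning-tree argument.

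First, I would observe that the crossover $c$ decomposes along the cycles of $p_\mu$. Since $c$ lies on a geodesic from $e$ to $p_\mu$ and any minimal transposition factorization of $p_\mu$ uses only transpositions swapping two elements within a single cycle of $p_\mu$ (otherwise a merge would require a later split), $c$ preserves the support $S_i$ of each cycle $q_i$ of $p_\mu$. Writing $c = c_1 \cdots c_{\ell(\mu)}$ with $c_i$ supported on $S_i$ and lying on a geodesic from $e$ to $q_i$, we get $c^\vee = \prod_i c_i^{-1} q_i$ and hence $c^{-1} c^\vee = \prod_i c_i^{-1}(c_i^{-1} q_i)$, a product of permutations on disjoint supports. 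It therefore suffices to show, for any $c$ on a geodesic from $e$ to a single forward cycle $q$ on $S = \{a_1 < \cdots < a_k\}$, that $c^{-1}(c^{-1} q)$ is a single $k$-cycle on $S$; its cycle minimum is then automatically $a_1 = \min S$.

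For this single-cycle case, I would apply the folklore non-crossing partition characterization cited in Section 2 to write $c$ as the product of forward cycles on the blocks of a non-crossing partition $\pi$ of $S$. By Proposition~\ref{prop:Closed}, $c^\vee$ is again a crossover, and a standard identification expresses $c^\vee$ as the product of forward cycles on the Kreweras complement $\pi^*$ of $\pi$, whence $\ell(\pi) + \ell(\pi^*) = k + 1$. Factoring each forward cycle on a block $B = \{b_1 < \cdots < b_m\}$ of $\pi$ as $(b_1\ b_2)(b_2\ b_3) \cdots (b_{m-1}\ b_m)$ produces an edge set $T_c$ of consecutive pairs within $\pi$-blocks, and analogously $T_{c^\vee}$ for $\pi^*$, with $|T_c| + |T_{c^\vee}| = k - 1$. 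Concatenating the reverse of $c$'s factorization with $c^\vee$'s yields a length-$(k-1)$ factorization of $c^{-1} c^\vee$ into the transpositions indexed by $T_c \cup T_{c^\vee}$.

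The crux is to show that $T_c \cup T_{c^\vee}$ is a spanning tree of $S$; this implies $c^{-1} c^\vee$ is a single $k$-cycle by the classical fact that the product of $k-1$ distinct transpositions forming a spanning tree (in any order) is necessarily a full $k$-cycle, since every step merges two tree-components and so decreases the cycle count by exactly one. Disjointness of $T_c$ and $T_{c^\vee}$ follows from $\pi \wedge \pi^* = \hat 0$ in the ordinary partition lattice: if $a$ and $b$ shared a block of both $\pi$ and $\pi^*$, the arcs $a$--$b$ and $\bar a$--$\bar b$ would unavoidably cross on the doubled cycle $1, \bar 1, \ldots, k, \bar k$ used to define the Kreweras complement. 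Connectedness follows from $\pi \vee \pi^* = \hat 1$: the $\langle c, c^\vee \rangle$-orbits on $S$ are exactly the blocks of $\pi \vee \pi^*$, and they contain the orbits of $c c^\vee = q$, which form the single orbit $S$. The main obstacle is this last step — combining the two Kreweras-complement properties with the tree-factorization fact — which, although classical, must be verified cleanly within the non-crossing partition framework adopted here.
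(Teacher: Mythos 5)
Your argument is correct, and while it begins with the same reduction as the paper --- splitting $c$ along the cycles of $p_\mu$ (using that each cycle of a geodesic point is a subcycle of a cycle of $p_\mu$) and then invoking the noncrossing-partition description of geodesics to a single forward cycle --- the core of your proof of the single-cycle case is genuinely different. The paper proceeds by a direct inductive computation: it writes the full cycle as $(i_1 \mapsto I_1 \mapsto i_2 \mapsto I_2 \mapsto \dots)$ along the elements of the outermost block, shows explicitly that multiplying by $c_1^{-2}$ yields another full cycle with a rearranged cyclic order, and then recurses into the interval containing the next block. You instead factor $c^{-1}$ and $c^\vee$ into consecutive-pair transpositions along the blocks of $\pi$ and of its Kreweras complement, and conclude via the Dénes-type fact that $k-1$ distinct transpositions forming a spanning tree multiply (in any order) to a full $k$-cycle; your verification of the tree property (disjointness from the interleaving of $a,\bar a,b,\bar b$ on the doubled circle, connectedness from the transitivity of $\langle c, c^\vee\rangle \ni cc^\vee = q$) is sound. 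Your route is more structural and arguably more illuminating, but it imports two classical inputs the paper's computation avoids: the identification of the partition underlying $c^\vee = c^{-1}q$ with the Kreweras complement of $\pi$, and the tree-factorization theorem. A small streamlining is available to you: since $|T_c| + |T_{c^\vee}| = \mathrm{d}(e,c) + \mathrm{d}(c,q) = k-1$ as a multiset and the edge set is connected (by your $cc^\vee = q$ observation), distinctness of the $k-1$ edges is forced --- a connected graph on $k$ vertices needs $k-1$ distinct edges --- so the Kreweras complement and the meet computation $\pi \wedge \pi^\ast = \hat 0$ can be dispensed with entirely. The paper's hands-on computation, by contrast, is self-contained and yields an explicit description of the resulting cycle, at the cost of being less conceptual.
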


\begin{proof}
First note that $c^{-1}c^\vee=c^{-2}p_\mu$.  
Since $c$ lies on a geodesic path linking 
$e$ to $p_\mu$, each cycle of $c$ is a 
subcycle of some cycle of $p_\mu$. 
Since the cycles of $p_\mu$ are intervals,
our task reduces to proving the following general
statement: whenever $c_1c_2\dots$ is a product 
of forward cycles which induce a noncrossing partition of
$\{1,\dots,k\}$, the product

    \begin{equation*}
    c_1^{-2}c_2^{-2}\dots
    (1\mapsto \dots \mapsto k)
    \end{equation*}
    
\noindent
is a cyclic permutation of the numbers $1,\dots,k$.
If this statement holds, then left multiplication of 
$p_\mu$ by $c^{-2}$ will change neither the 
cycle structure nor cycle minima of $p_\mu$.

Note that we can assume each cycle $c_i$ is of
length at least two.
Suppose that the first cycle, $c_1$, is

    \begin{equation*}
        c_1 = (i_1 \mapsto i_2 \mapsto \dots \mapsto i_{k_1}),
    \end{equation*}
    
\noindent
where $1 \leq i_1 < i_2 < \dots < i_{k_1} \leq k$.
Let us write the full forward $k$-cycle in the form

    \begin{equation*}
        (1\mapsto \dots \mapsto k)=(i_1 \mapsto I_1
        \mapsto i_2 \mapsto I_2 \mapsto \dots 
        \mapsto i_{k_1} \mapsto I_{k_1}),
    \end{equation*}

\noindent
where the $I_*$'s are intervals.  Since

    \begin{align*}
        (i_1 \mapsto I_1
        \mapsto i_2 \mapsto I_2 \mapsto \dots 
        \mapsto i_{k_1} \mapsto I_{k_1})
                 &=(i_1\mapsto \dots \mapsto i_{k_1})
         (i_1 \mapsto I_1) (i_2 \mapsto I_2) \dots (i_{k_1} \mapsto I_{k_1}),
    \end{align*}

\noindent
we have

    \begin{align*}
        (i_1\mapsto \dots \mapsto i_{k_1})^{-2}
        (1\mapsto \dots \mapsto k)
        &=(i_1\mapsto \dots \mapsto i_{k_1})^{-1}
        (i_1 \mapsto I_1) (i_2 \mapsto I_2) \dots (i_{k_1} \mapsto I_{k_1})\\
        &=(i_{k_1}\mapsto \dots \mapsto i_1)
       (i_{k_1} \mapsto I_{k_1}) \dots (i_2 \mapsto I_2) (i_1 \mapsto I_1) \\
        &=(i_{k_1} \mapsto I_{k_1} \mapsto \dots
         \mapsto i_2 \mapsto I_2 \mapsto i_1 \mapsto I_1).
    \end{align*}
    
\noindent
Now, since the cycles $c_1,c_2,\dots$ induce a noncrossing
partition of $\{1,\dots,k\}$, the cycle $c_2$ is contained
in one of the intervals $I_1,\dots,I_{k_1+1}$.  Thus the 
same argument applies to compute

    \begin{equation*}
     c_2^{-2}(i_{k_1} \mapsto I_{k_1} \mapsto \dots
         \mapsto i_2 \mapsto I_2 \mapsto i_1 \mapsto I_1).
    \end{equation*}

\end{proof}

\subsection{Encoding}
The duality operation has been introduced, and its basic
properties developed, in order to make available a 
structured means of encoding \emph{pairs} of midpoints 
using crossovers.
To this end, we introduce the mapping
 	
 	\begin{equation*}
     	\Cr(\mu) \longrightarrow M (a,b) \times M(a,b)
 	\end{equation*}
 	
 	\noindent
 	defined by

 	\begin{equation*}
     	c \mapsto \Phi_c(a,b) :=
     	(\varphi_c(a,b), \varphi_{c^\vee}(a,b)).
 	\end{equation*}
 	
 	\noindent
 	Thus $(x,y)=\Phi_c(a,b)$ is the pair
 	of midpoints of $a$ and $b$ encoded 
 	by $c$ and $c^\vee$.
 	The duality relationship
 	between $c$ and $c^\vee$ induces algebraic and 
 	geometric relations between the pairs
 	$(a,b)$ and $(x,y)$ which may be collectively called
 	\emph{duality of midpoints}.
 	
 	\begin{prop}
 	\label{prop:Duality}
     	If $(x,y) = \Phi_c(a,b)$, then:
     	
     	\begin{enumerate}
     	
     	\smallskip
     	\item
     	$x^{-1}y$ has the same ordered cycle 
     	type and sequence of cycle minima as $a^{-1}b$;
     	
     	\smallskip
     	\item
     	$a$ and $b$ are midpoints of $x$ and $y$;
     	
     	\smallskip
     	\item
     	$u_{x^{-1}y} = u_{a^{-1}b} u_{c^{-1}c^\vee}$.
     	\end{enumerate}
 	\end{prop}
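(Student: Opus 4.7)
The plan is to reduce all three parts to the single identity
\[
x^{-1}y = u(c^{-1}c^\vee)\, u^{-1},
\]
where I abbreviate $u := u_{a^{-1}b}$. This follows immediately from the definitions $x = aucu^{-1}$ and $y = auc^\vee u^{-1}$, and it makes Lemma~\ref{lem:LemForDuality} the primary engine of the proof.

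For part~(1), Lemma~\ref{lem:LemForDuality} already gives that $c^{-1}c^\vee$ shares the ordered cycle type and the sequence of cycle minima of $p_\mu$. Inspecting the proof of that lemma shows further that each cycle of $c^{-1}c^\vee$ coincides \emph{as a set} with the corresponding cycle of $p_\mu$ (the factor $c^{-2}$ merely cyclically shifts elements inside each cycle of $p_\mu$). Conjugating by $u$ then carries these cycles onto the cycles of $u p_\mu u^{-1} = a^{-1}b$; since $u$ by definition transports the ordered sequence of cycle minima of $p_\mu$ onto that of $a^{-1}b$, the ordered cycle factorization of $x^{-1}y$ matches that of $a^{-1}b$ both in cycle lengths and in minima. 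I expect this to be the main obstacle, since one must track not merely the multiset of cycle minima but their \emph{order} in the ordered cycle factorization — precisely what the set-wise cycle coincidence takes care of.

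Parts~(2) and~(3) then follow by direct verification. For part~(2), I would compute the pairwise distances among $a,b,x,y$ using only the left/right translation and conjugation invariances of $\mathrm{d}$, obtaining $\mathrm{d}(a,x)=\mathrm{d}(b,y)=\mathrm{d}(e,c)$, $\mathrm{d}(a,y)=\mathrm{d}(b,x)=\mathrm{d}(e,c^\vee)$, and $\mathrm{d}(x,y)=\mathrm{d}(e,c^{-1}c^\vee)=\mathrm{d}(e,p_\mu)$ (the last step by part~(1)); the midpoint conditions for $a$ and $b$ with respect to $x$ and $y$ then drop out of the crossover identities $\mathrm{d}(e,c)+\mathrm{d}(e,c^\vee)=\mathrm{d}(e,p_\mu)$ and $|\mathrm{d}(e,c)-\mathrm{d}(e,c^\vee)|\le 1$, both of which hold because $c$ and $c^\vee$ belong to $\Cr(\mu)$ by Proposition~\ref{prop:Closed}. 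For part~(3), set $w := u_{c^{-1}c^\vee}$; since $c^{-1}c^\vee$ has the same cycle minima as $p_\mu$, $w$ fixes each of them, so $uw$ both conjugates $p_\mu$ to $x^{-1}y$ (via the key identity) and carries the cycle minima of $p_\mu$ to those of $x^{-1}y$, which by part~(1) coincide with those of $a^{-1}b$. The uniqueness clause in the definition of the $u_{(\cdot)}$ permutation then yields $u_{x^{-1}y}=uw$, as required.
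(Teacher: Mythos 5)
Your proposal is correct and follows essentially the same route as the paper: derive $x^{-1}y = u_{a^{-1}b}\,c^{-1}c^\vee\,u_{a^{-1}b}^{-1}$, feed Lemma~\ref{lem:LemForDuality} into part~(1), verify part~(2) by computing the six pairwise distances via translation/conjugation invariance and the crossover identities, and get part~(3) from the conjugation identity plus the fact that $u_{c^{-1}c^\vee}$ fixes the cycle minima of $p_\mu$. Your extra observation in part~(1) — that each cycle of $c^{-1}c^\vee$ coincides \emph{as a set} with the corresponding cycle of $p_\mu$ — is a slightly more careful justification than the paper's one-line deduction, and is a legitimate way to close that step; the only nitpick is that in part~(2) the identity $\mathrm{d}(e,c)+\mathrm{d}(e,c^\vee)=\mathrm{d}(e,p_\mu)$ comes from $\mathrm{d}(e,c^\vee)=\mathrm{d}(c,p_\mu)$ together with $c\in\Cr(\mu)$, not merely from $c,c^\vee$ both lying in $\Cr(\mu)$.
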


 	\begin{proof}
 	Let us prove these assertions in order.
 	
 		\begin{enumerate}
 		
     		\smallskip
 		    \item 
 			First, we have
  			 \begin{align*}
				x^{-1}y &= (\varphi_c(a,b))^{-1}(\varphi_{c^\vee}(a,b)) \\
				&=(au_{a^{-1}b}cu_{a^{-1}b}^{-1})^{-1}
   				(au_{a^{-1}b}c^\vee u_{a^{-1}b}^{-1}) \\
				&= u_{a^{-1}b}c^{-1}c^\vee u_{a^{-1}b}^{-1},
   			 \end{align*}

\noindent
Let $\mu$ be the ordered cycle type of $a^{-1}b$.
By definition, $u_{a^{-1}b}$
conjugates $p_\mu$ into $a^{-1}b$ and transforms
the sequence of cycle minima of $p_\mu$ into that of $a^{-1}b$.
By Lemma \ref{lem:LemForDuality}, $c^{-1}c^\vee$
has both the same ordered cycle type and
sequence of cycle minima as $p_\mu$.  Thus, 
$u_{a^{-1}b}c^{-1}c^\vee u_{a^{-1}b}^{-1}$ has both the same
ordered cycle type and sequence of cycle minima as $a^{-1}b$.  

\smallskip
\item
We now show that $a$ and $b$ are midpoints of $x$ and $y$.
Since 

    \begin{equation*}
        a^{-1}x=a^{-1}(au_{a^{-1}b}cu_{a^{-1}b}^{-1})
        =u_{a^{-1}b}cu_{a^{-1}b}^{-1}
    \end{equation*}

\noindent
 and 
 
    \begin{align*}
    	y^{-1}b&=(au_{a^{-1}b}c^\vee u_{a^{-1}b}^{-1})^{-1}b \\
    	&=(au_{a^{-1}b}c^{-1}p_\mu u_{a^{-1}b}^{-1})^{-1}b \\
    	&=(au_{a^{-1}b}c^{-1}u_{a^{-1}b}^{-1}
        	u_{a^{-1}b}p_\mu u_{a^{-1}b}^{-1})^{-1}b \\
    &=(au_{a^{-1}b}c^{-1}u_{a^{-1}b}^{-1}a^{-1}b)^{-1}b \\
    &= (b^{-1}au_{a^{-1}b})c(b^{-1}au_{a^{-1}b})^{-1},
    \end{align*}

\noindent
both $a^{-1}x$ and $y^{-1}b$ are conjugates of $c$.
By the conjugation invariance of $\mathrm{d}$ we thus
have

    \begin{equation*}
         \mathrm{d}(e,a^{-1}x) = \mathrm{d}(e,c) =
         \mathrm{d}(e,y^{-1}b),
    \end{equation*}
    
\noindent
whence

    \begin{equation*}
         \mathrm{d}(a,x) = \mathrm{d}(e,c) =
         \mathrm{d}(y,b).
    \end{equation*}

\noindent
Similarly, since

    \begin{equation*}
        a^{-1}y=a^{-1}(au_{a^{-1}b}c^\vee u_{a^{-1}b}^{-1})
        =u_{a^{-1}b}c^\vee u_{a^{-1}b}^{-1}
    \end{equation*}

\noindent
and 
\begin{align*}
	x^{-1}b&=(au_{a^{-1}b}c u_{a^{-1}b}^{-1})^{-1}b \\
	&=u_{a^{-1}b}c^{-1}u_{a^{-1}b}^{-1}a^{-1}b\\ 
	&=u_{a^{-1}b}c^\vee p_\mu^{-1}u_{a^{-1}b}^{-1}a^{-1}b \\ 
	&=u_{a^{-1}b}c^\vee 
    	(u_{a^{-1}b}^{-1}(a^{-1}b)^{-1}u_{a^{-1}b})u_{a^{-1}b}^{-1}a^{-1}b \\ 
	&=u_{a^{-1}b}c^\vee u_{a^{-1}b}^{-1},
\end{align*}

\noindent
both $a^{-1}y$ and $x^{-1}b$ are conjugates of $c^\vee$,
and we have 

    \begin{equation*}
     \mathrm{d}(a,y)=\mathrm{d}(e,c^\vee)=\mathrm{d}(x,b).
     \end{equation*}

\noindent
Now, since $x^{-1}y$ has the same ordered cycle type as
$a^{-1}b$, we have

    \begin{equation*}
        \mathrm{d}(x,y) = \mathrm{d}(a,b) = \mathrm{d}(e,p_\mu).
    \end{equation*}

\noindent
Putting this all together, we have

    \begin{align*}
    \mathrm{d}(x,a) + \mathrm{d}(a,y) 
    &= \mathrm{d}(e,c) + \mathrm{d}(e,c^\vee) \\
    &= \mathrm{d}(e,c) + \mathrm{d}(c,p_\mu) \\
    &= \mathrm{d}(e,p_\mu) \\
    &= \mathrm{d}(x,y),
    \end{align*}

\noindent
and

    \begin{align*}
    \mathrm{d}(x,a) - \mathrm{d}(a,y) 
    &= \mathrm{d}(e,c) - \mathrm{d}(e,c^\vee) \\
    &= \mathrm{d}(e,c) - \mathrm{d}(c,p_\mu) \\
    &= \varepsilon,
    \end{align*}

\noindent
where $\varepsilon \in \{-1,0,1\}$ because
$c$ is a midpoint of $e$ and $p_\mu$.  Thus, $a$
is a midpoint of $x$ and $y$.  The proof that
$b$ is a midpoint of $x$ and $y$ is just the same.

\smallskip
\item
Finally, we prove the identity 

    \begin{equation*}
        u_{x^{-1}y} = u_{a^{-1}b} u_{c^{-1}c^\vee},
    \end{equation*}
    
\noindent
which will be needed below in the proof of Proposition
\ref{prop:Decoding}.  Since

	\begin{equation*}
		\begin{split}
		x^{-1}y &= (au_{a^{-1}b} c u_{a^{-1}b}^{-1})^{-1} 
		(au_{a^{-1}b} c^\vee u_{a^{-1}b}^{-1}) \\
		&= u_{a^{-1}b} c^{-1}c^\vee u_{a^{-1}b}^{-1} \\
		&= u_{a^{-1}b} u_{c^{-1}c^\vee} p_\mu u_{c^{-1}c^\vee} ^{-1}
		u_{a^{-1}b}^{-1} \\
		&= (u_{a^{-1}b} u_{c^{-1}c^\vee}) p_\mu (u_{a^{-1}b} 
		u_{c^{-1}c^\vee})^{-1},
		\end{split}
	\end{equation*}
	
\noindent
we have that $u_{a^{-1}b} u_{c^{-1}c^\vee}$ conjugates
$p_\mu$ into $x^{-1}y$.  This is one of the two properties
that uniquely defines the permutation $u_{x^{-1}y}$,
the other being that it transforms the sequence of cycle minima
of $p_\mu$ into the sequence of cycle minima of $x^{-1}y$.  
Since $u_{c^{-1}c^\vee}$ stabilizes the sequence of cycle
minima of $p_\mu$ (by Lemma \ref{lem:LemForDuality}), 
conjugation of $p_\mu$ by 
$u_{a^{-1}b} u_{c^{-1}c^\vee}$ produces a permutation 
whose sequence of cycle minima coincides with that of
$a^{-1}b$, and hence with that of $x^{-1}y$ by Part (1).

\end{enumerate}
\end{proof}
 	
 \subsection{Decoding} 
    Our constructions so far may be thought of
    in cryptographic terms, as follows.
 Alice and Bob wish to transmit messages to
 one another across an insecure channel.  They
 meet at a secure location, and agree on a composition
 $\mu \vDash n$ and a crossover $c \in \Cr(\mu)$
 to be used as a secret encryption key.  Alice
 and Bob then return to their respective locations on opposite
 ends of the channel.  The plaintext
 messages to be transmitted are pairs $(a,b) \in
 S(n) \times S(n)$ such that $a^{-1}b \in \vec{C}_\mu$.
 To send the message $(a,b)$ to Bob, Alice computes the
 ciphertext $(x,y) = \Phi_c(a,b)$ and transmits it to
 Bob across the channel. Bob receives the ciphertext $(x,y)$,
 and wishes to recover the plaintext message.  Our next result 
 proves that there is a well-defined decryption key $\delta(c)$
 such that $(a,b)=\Phi_{\delta(c)}(x,y)$ --- in fact, the
 proof explains how to compute $\delta(c)$.  
 
    \begin{prop}
    \label{prop:Decoding}
        For each composition $\mu \vDash n$,
        there exists a function
        
        \begin{equation*}
            \delta_\mu : \Cr(\mu) \longrightarrow \Cr(\mu)
        \end{equation*}
        
        \noindent
        such that
        
        \begin{equation*}
            \Phi_{\delta_\mu(c)}(\Phi_c(a,b)) = (a,b)
        \end{equation*}

        \noindent
        holds for all $c \in \Cr(\mu)$ and each
        $(a,b) \in S(n) \times S(n)$ verifying
        $a^{-1}b \in \vec{C}_\mu$.  Moreover,
        this function is an involution.
    \end{prop}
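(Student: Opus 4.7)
The plan is to read off a formula for $\delta_\mu(c)$ by forcing the first coordinate of the decoded pair to equal $a$, then verify that the same formula also recovers the second coordinate $b$, and finally deduce the involution property formally from the setup rather than by a fresh calculation.

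First I would fix $c \in \Cr(\mu)$ and a pair $(a,b)$ with $a^{-1}b \in \vec{C}_\mu$, and set $(x,y) := \Phi_c(a,b)$. By Proposition \ref{prop:Duality}(1) one has $x^{-1}y \in \vec{C}_\mu$, and by Proposition \ref{prop:Duality}(2) the point $a$ lies in $M(x,y)$; hence there is a unique $d \in \Cr(\mu)$ with $\varphi_d(x,y) = a$, given by
\begin{equation*}
d \;=\; u_{x^{-1}y}^{-1}\, x^{-1} a\, u_{x^{-1}y}.
\end{equation*}
Substituting $x^{-1} a = u_{a^{-1}b} c^{-1} u_{a^{-1}b}^{-1}$ (from the definition of $\varphi_c$) and $u_{x^{-1}y} = u_{a^{-1}b} u_{c^{-1}c^\vee}$ (from Proposition \ref{prop:Duality}(3)), the factor of $u_{a^{-1}b}$ cancels on both sides and $d$ collapses to
\begin{equation*}
\delta_\mu(c) \;:=\; u_{c^{-1}c^\vee}^{-1}\, c^{-1}\, u_{c^{-1}c^\vee},
\end{equation*}
which is manifestly a function of $c$ alone. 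This collapse of the $(a,b)$-dependence is the only real obstacle in the whole argument, and it is exactly what Proposition \ref{prop:Duality}(3) is engineered to produce; note also that $\delta_\mu(c) \in \Cr(\mu)$ comes for free, because the parameterization $\Cr(\mu) \to M(x,y)$ is a bijection.

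Next I would verify the second coordinate, namely $\varphi_{\delta_\mu(c)^\vee}(x,y) = b$. A parallel computation, using $a^{-1}b = u_{a^{-1}b} p_\mu u_{a^{-1}b}^{-1}$ together with $c^{-1} p_\mu = c^\vee$, yields $x^{-1} b = u_{a^{-1}b}\, c^\vee\, u_{a^{-1}b}^{-1}$, reducing the required equation to
\begin{equation*}
\delta_\mu(c)^\vee \;=\; u_{c^{-1}c^\vee}^{-1}\, c^\vee\, u_{c^{-1}c^\vee}.
\end{equation*}
Unfolding $\delta_\mu(c)^\vee = \delta_\mu(c)^{-1} p_\mu$ and applying the defining property $u_{c^{-1}c^\vee}\, p_\mu\, u_{c^{-1}c^\vee}^{-1} = c^{-1} c^\vee$ collapses this to the triviality $c \cdot c^{-1} c^\vee = c^\vee$. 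Finally, for the involution statement I would avoid a direct computation and instead apply the just-established identity $\Phi_{\delta_\mu(c')} \circ \Phi_{c'} = \id$ with $c' := \delta_\mu(c)$ and plaintext $(x,y)$ (valid because $x^{-1}y \in \vec{C}_\mu$). Since $\Phi_{\delta_\mu(c)}(x,y) = (a,b)$, this forces $\Phi_{\delta_\mu(\delta_\mu(c))}(a,b) = \Phi_c(a,b)$, and comparing first coordinates through the bijection $\Cr(\mu) \to M(a,b)$ yields $\delta_\mu(\delta_\mu(c)) = c$, completing the proof.
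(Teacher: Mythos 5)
Your proposal is correct and follows essentially the same route as the paper: you arrive at the identical formula $\delta_\mu(c) = u_{c^{-1}c^\vee}^{-1}\, c^{-1}\, u_{c^{-1}c^\vee}$ via Proposition \ref{prop:Duality}(3), reduce the second coordinate to the same triviality, and your involution argument is the paper's triple-encoding trick. The only (valid) streamlining is that you obtain $\delta_\mu(c) \in \Cr(\mu)$ for free from the bijectivity of the parameterization of $M(x,y)$ together with Proposition \ref{prop:Duality}(2), where the paper instead checks the two distance conditions for membership in $\Cr(\mu)$ by direct computation.
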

    
We call the function $\delta_\mu$ of Proposition
\ref{prop:Decoding} the \emph{decoding function} of
type $\mu$.  In order to lighten the notation, we
will henceforth omit the dependence of $\delta$ on $\mu$.

\begin{proof}
Fix a composition $\mu \vDash n$.
We claim that the corresponding decoding function
is given by

    \begin{equation*}
        \delta(c) := u_{c^{-1}c^\vee}^{-1}c^{-1}u_{c^{-1}c^\vee}.
    \end{equation*}
    
First, let us check that the codomain of $\delta$ is indeed
$\Cr(\mu)$, i.e. that $\delta(c)$ is in fact a midpoint of
$e$ and $p_\mu$.  We have

    \begin{align*}
        \mathrm{d}(e,\delta(c)) &= 
        \mathrm{d}(e,u_{c^{-1}c^\vee}^{-1}c^{-1}u_{c^{-1}c^\vee})\\
        &=\mathrm{d}(e,c^{-1})\\
        &=\mathrm{d}(e,c)
    \end{align*}
    
\noindent
and

    \begin{align*}
        \mathrm{d}(\delta(c),p_\mu)
        &=\mathrm{d}(u_{c^{-1}c^\vee}^{-1}c^{-1}u_{c^{-1}c^\vee},p_\mu)\\
        &=\mathrm{d}(c^{-1},u_{c^{-1}c^\vee}p_\mu u_{c^{-1}c^\vee}^{-1})\\
        &=\mathrm{d}(c^{-1},c^{-1}c^\vee)\\
        &=\mathrm{d}(e,c^\vee)\\
       &=\mathrm{d}(e,c^{-1}p_\mu)\\
        &=\mathrm{d}(c,p_\mu).
    \end{align*}
    
\noindent
Thus, since $c$ is a midpoint of $e$ and $p_\mu$, we have

    \begin{equation*}
        \mathrm{d}(e,\delta(c))+\mathrm{d}(\delta(c),p_\mu)
        =\mathrm{d}(e,c) + \mathrm{d}(c,p_\mu)
        =\mathrm{d}(e,p_\mu)
    \end{equation*}

\noindent
and 

    \begin{equation*}
        \mathrm{d}(e,\delta(c))-\mathrm{d}(\delta(c),p_\mu)
        =\mathrm{d}(e,c) - \mathrm{d}(c,p_\mu)
        =\varepsilon
    \end{equation*}

\noindent
with $\varepsilon \in \{-1,0,1\}$.

Next, let $(a,b) \in S(n) \times S(n)$ be a valid plaintext, 
i.e. a pair of permutations such that $a^{-1}b \in \vec{C}_\mu$,
and let $(x,y) = \Phi_c(a,b)$ be the corresponding ciphertext.  
We then have 

	\begin{equation*}
		\begin{split}
		x &= \varphi_c(a,b) = au_{a^{-1}b} c u_{a^{-1}b}^{-1} \\
		y &= \varphi_c(a,b) = au_{a^{-1}b} c^\vee u_{a^{-1}b}^{-1},
		\end{split}
	\end{equation*}
	
\noindent
By Proposition \ref{prop:Duality}, Part (1), we may 
re-encode $(x,y)$ using $\delta(c)$ as an encryption key,
arriving at a new ciphertext
$(x',y') = \Phi_{\delta(c)}(x,y)$.  We claim that this 
re-encoding decrypts $(x,y)$, i.e. that
$(x',y')=(a,b)$.  Indeed,

    \begin{align*}
        x' &= \varphi_{\delta(c)}(x,y) \\
        &= x u_{x^{-1}y} \delta(c) u_{x^{-1}y}^{-1} \\
        &= (au_{a^{-1}b} c u_{a^{-1}b}^{-1})
        (u_{a^{-1}b}u_{c^{-1}c^\vee})
        (u_{c^{-1}c^\vee}^{-1}c^{-1}u_{c^{-1}c^\vee})
        (u_{a^{-1}b}u_{c^{-1}c^\vee})^{-1} \\
        &= a,
    \end{align*}
    
\noindent
where we made use of Proposition \ref{prop:Duality},
Part (3).  Similarly, we have

    \begin{align*}
        y' &= \varphi_{\delta(c)^\vee}(x,y) \\
        &= x u_{x^{-1}y} \delta(c)^{\vee} u_{x^{-1}y}^{-1} \\
        &= (au_{a^{-1}b} c u_{a^{-1}b}^{-1})
        (u_{a^{-1}b}u_{c^{-1}c^\vee})
        (u_{c^{-1}c^\vee}^{-1}c^{-1}u_{c^{-1}c^\vee})^\vee
        (u_{a^{-1}b}u_{c^{-1}c^\vee})^{-1} \\
        &=(au_{a^{-1}b} c u_{a^{-1}b}^{-1})
        (u_{a^{-1}b}u_{c^{-1}c^\vee})
        (u_{c^{-1}c^\vee}^{-1}cu_{c^{-1}c^\vee}p_\mu)
        (u_{a^{-1}b}u_{c^{-1}c^\vee})^{-1} \\
        &=a u_{a^{-1}b}c^2u_{c^{-1}c^\vee}p_\mu
        u_{c^{-1}c^\vee}^{-1} u_{a^{-1}b}^{-1} \\
        &= a u_{a^{-1}b} cc^\vee u_{a^{-1}b} \\
        &= a u_{a^{-1}b} p_\mu u_{a^{-1}b} \\
        &=b.
    \end{align*}

It remains to show that
\begin{equation*}
\delta: \Cr(\mu) \longrightarrow \Cr(\mu)
\end{equation*}
is an involution.  Let $c$ be a $\mu$-crossover,
let $(a,b)$ be a valid message,
and consider the triple encoding

    \begin{equation*}
        \Phi_{\delta^2(c)}(\Phi_{\delta(c)}(\Phi_c(a,b))).
    \end{equation*}
    
\noindent
Since $\delta(c)$ is the decryption key corresponding
to the encryption key $c$, we have

    \begin{equation*}
        \Phi_{\delta^2(c)}(\Phi_{\delta(c)}(\Phi_c(a,b)))
        = \Phi_{\delta^2(c)}(a,b).
    \end{equation*}

\noindent
On the other hand, since $\delta^2(c)$ is the 
decryption key corresponding to the encryption key 
$\delta(c)$, we also have

    \begin{equation*}
        \Phi_{\delta^2(c)}(\Phi_{\delta(c)}(\Phi_c(a,b)))
        = \Phi_{c}(a,b).
    \end{equation*}
    
\noindent
Thus $\Phi_{\delta^2(c)}(a,b) = \Phi_{c}(a,b)$,
which readily implies $\delta^2(c)=c$.

\end{proof}
	
\section{The Brunn-Minkowski inequality} 

	\subsection{Without a curvature term}
	We now put our encoding-decoding formalism to work.
    We begin by proving the ``flat'' Brunn-Minkowski
    inequality for $S(n)$.
	
    	\begin{thm}
    	\label{thm:BrunnMinkowski}
        	For any nonempty $A,B \subseteq S(n)$, we have
        	
        	\begin{equation*}
            	\log |M| \geq \frac{1}{2}\log|A| + \frac{1}{2}\log|B|,
        	\end{equation*}
        	
        	\noindent
        	where $M$ is the midpoint set of $A$ and $B$.

    	\end{thm}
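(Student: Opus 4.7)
The plan is to construct an explicit injection $\Psi: A \times B \longrightarrow M \times M$, from which the bound $|M|^2 \geq |A||B|$, and hence the claimed logarithmic inequality, follows immediately. The construction will use exactly the encoding/decoding formalism developed in Sections 3.2--3.4, with the decoding proposition (Proposition \ref{prop:Decoding}) supplying the injectivity.

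First, for each composition $\mu \vDash n$, fix once and for all a distinguished crossover $c_\mu \in \Cr(\mu)$; any choice works, e.g.\ $c_\mu = e$, which trivially lies in $\Cr(\mu)$. Then define
\begin{equation*}
\Psi(a,b) := \Phi_{c_\mu}(a,b), \qquad \text{where } \mu \text{ is the ordered cycle type of } a^{-1}b.
\end{equation*}
By construction, both coordinates of $\Psi(a,b)$ lie in $M(a,b) \subseteq M$, so $\Psi$ does land in $M \times M$.

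The key step is injectivity. Suppose $\Psi(a,b) = (x,y) = \Psi(a',b')$, and let $\mu, \mu'$ be the ordered cycle types of $a^{-1}b$ and $(a')^{-1}b'$ respectively. By Proposition \ref{prop:Duality}, Part (1), $x^{-1}y$ has ordered cycle type equal to that of $a^{-1}b$, and likewise to that of $(a')^{-1}b'$, forcing $\mu = \mu'$ and hence $c_\mu = c_{\mu'}$. Applying Proposition \ref{prop:Decoding} with this common $\mu$ and crossover $c_\mu$ gives
\begin{equation*}
(a,b) = \Phi_{\delta(c_\mu)}(x,y) = (a',b'),
\end{equation*}
so $\Psi$ is injective.

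I do not expect any serious obstacle here: the entire technical content (duality of midpoints, existence of the decoding involution, the fact that $x^{-1}y$ has the same ordered cycle type as $a^{-1}b$) has already been packaged into Propositions \ref{prop:Duality} and \ref{prop:Decoding}. The only mild subtlety is the need to choose $c_\mu$ in a way that does not depend on $(a,b)$, since the decoder must reconstruct it from $(x,y)$ alone; but since $\mu$ itself is recoverable from $x^{-1}y$, fixing $c_\mu$ a priori for every $\mu$ suffices. Once injectivity of $\Psi$ is established, taking logarithms of $|M|^2 \geq |A||B|$ yields the stated inequality.
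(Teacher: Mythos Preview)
Your proposal is correct and follows essentially the same approach as the paper: fix one crossover per ordered cycle type, encode via $\Phi_{c_\mu}$, and use Proposition~\ref{prop:Duality}(1) together with Proposition~\ref{prop:Decoding} to recover $(a,b)$ from $(x,y)$.

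One small slip: the parenthetical example $c_\mu = e$ is not valid in general. The identity satisfies $\mathrm{d}(e,e)-\mathrm{d}(e,p_\mu) = -(n-\ell(\mu))$, which lies in $\{-1,0,1\}$ only when $\mathrm{d}(e,p_\mu)\leq 1$, so $e$ is usually \emph{not} a midpoint of $e$ and $p_\mu$. This does not affect the argument, since $\Cr(\mu)$ is nonempty (any geodesic from $e$ to $p_\mu$ passes through a midpoint), and any choice of $c_\mu$ suffices; just drop the example.
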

    	
    	\begin{proof}
    	 	Suppose that there exists an injection
        	
        	\begin{equation*}
            	\Phi:A \times B \longrightarrow
            	M \times M.
        	\end{equation*}
        	
        	\noindent
        	Then
        	
        	\begin{equation*}
            	|M|^2  \geq |A| |B|,
        	\end{equation*}
        	
        	\noindent
        	and taking logs this becomes
        	
        	\begin{equation*}
            	\log |M| \geq 
            	\frac{1}{2}\log|A| + \frac{1}{2}\log|B|.
        	\end{equation*}
    	
        	To construct such an injection,
        	let $\mu_1,\dots,\mu_m$ be an enumeration
        	of the ordered cycle types of the permutations
        	
        	\begin{equation*}
            	a^{-1}b, \quad (a,b) \in A \times B.
        	\end{equation*}
        	
        	\noindent
        	For each $1 \leq i \leq m$, choose 
        	an encryption key $c_i \in \Cr(\mu_i)$, and let
        	$d_i = \delta(c_i)$ be the 
        	corresponding decryption key.  
        	
        	Partition $A \times B$ into classes 
        	
        	\begin{equation*}
            	C_i = \{ (a,b) \in A \times B : a^{-1}b 
            	\in \vec{C}_{\mu_i}\}, \quad 1 \leq i \leq m,
        	\end{equation*}

        	\noindent
        	and consider the map
        	
        	\begin{equation*}
            	\Phi:A\times B \longrightarrow M \times M
        	\end{equation*}
        	
        	\noindent
        	whose restriction to each $C_i$ is given
        	by encryption using key $c_i$:
        	
        	\begin{equation*}
            	\Phi(a,b) = \Phi_{c_i}(a,b).
        	\end{equation*}
        	
        	\noindent
        	We claim that $\Phi$ is injective.
        	Indeed, if 
        	
        	\begin{equation*}
            	\Phi(a,b) = \Phi(a',b')
        	\end{equation*}
        	
        	\noindent
        	for some $(a,b), (a',b') \in A \times B$,
        	then we must have $(a,b), (a',b') \in C_i$
        	for some $1 \leq i \leq m$ 
        	by Proposition \ref{prop:Duality}.
        	Hence
        	
        	\begin{equation*}
            	(a,b) = \Phi_{d_i}(\Phi_{c_i}(a,b)) 
            	= \Phi_{d_i}(\Phi_{c_i}(a',b'))
            	= (a',b'),
        	\end{equation*}
        	
        	\noindent
        	by Proposition \ref{prop:Decoding}.
    	\end{proof}

	\subsection{With a curvature term}
	Theorem \ref{thm:BrunnMinkowski} above was 
	proved by injecting $A \times B$ into
	$M \times M$.  The injection used was 
	a straightforward application of the formalism
	of encoding and decoding developed above.
	We now obtain the curved Brunn-Minkowski inequality
	stated as Theorem \ref{thm:Main} in the introduction
	by injecting two copies of $A \times B$ into
	$M \times M$.  The construction of the required
	injection involves a subtler use of
	encoding/decoding:
	the proof requires choosing two parallel sequences
	of encryption keys which are coupled in a special
	way.
	
	\begin{thm}
    	\label{thm:CurvedBrunnMinkowski}
        	For any nonempty $A,B \subseteq S(n)$, we have
        	
        	\begin{equation*}
            	\log |M| \geq \frac{1}{2}\log|A| + \frac{1}{2}\log|B|
            	+ \frac{K}{8}\mathrm{d}(A,B)^2
        	\end{equation*}
        	
        	\noindent
        	where $M$ is the midpoint set of $A$ and $B$
        	and
        	
        	\begin{equation*}
            	K = \frac{4\log 2}{(n-1)^2}.
        	\end{equation*}
    	\end{thm}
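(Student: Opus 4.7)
The plan is to refine the encoding construction of Theorem~\ref{thm:BrunnMinkowski} so as to embed two disjoint copies of $A \times B$ into $M \times M$ whenever $A$ and $B$ lie at positive distance. If such a doubled embedding exists we obtain $|M|^2 \geq 2|A|\,|B|$, hence
\begin{equation*}
\log|M| \;\geq\; \tfrac{1}{2}\log|A| + \tfrac{1}{2}\log|B| + \tfrac{\log 2}{2};
\end{equation*}
since the diameter of $S(n)$ equals $n-1$ we have $\mathrm{d}(A,B)^2 \leq (n-1)^2$, so
\begin{equation*}
\frac{\log 2}{2} \;\geq\; \frac{\log 2}{2(n-1)^2}\,\mathrm{d}(A,B)^2 \;=\; \frac{K}{8}\,\mathrm{d}(A,B)^2
\end{equation*}
with $K = 4\log 2/(n-1)^2$, and the claimed inequality follows. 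If instead $\mathrm{d}(A,B) = 0$ the curvature term vanishes and Theorem~\ref{thm:BrunnMinkowski} already suffices, so I henceforth assume $\mathrm{d}(A,B) \geq 1$.

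To construct the doubled embedding, I enumerate the ordered cycle types $\mu_1, \dots, \mu_m$ appearing as the type of $a^{-1}b$ for $(a,b) \in A \times B$, and partition $A \times B = \bigsqcup_i C_i$ where $C_i = \{(a,b) : a^{-1}b \in \vec{C}_{\mu_i}\}$. Since $\mathrm{d}(A,B) \geq 1$ forces $a \neq b$ on every such pair, each $\mu_i$ has at least one part of size $\geq 2$, whence $|\Cr(\mu_i)| \geq 2$. For each $i$, select two \emph{distinct} crossovers $c_i^{(1)}, c_i^{(2)} \in \Cr(\mu_i)$ and define $\Phi^{(j)}\colon A \times B \to M \times M$ $(j=1,2)$ by $\Phi^{(j)}(a,b) = \Phi_{c_i^{(j)}}(a,b)$ whenever $(a,b) \in C_i$. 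Each $\Phi^{(j)}$ is individually injective by the argument already used in Theorem~\ref{thm:BrunnMinkowski}, so the problem reduces to disjointness of the two images in $M \times M$.

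A putative collision $\Phi^{(1)}(a,b) = \Phi^{(2)}(a',b') = (x,y)$ forces $(a,b)$ and $(a',b')$ into a common class $C_i$ by Proposition~\ref{prop:Duality}(1); Proposition~\ref{prop:Decoding} then determines $(a,b) = \Phi_{\delta(c_i^{(1)})}(x,y)$ and $(a',b') = \Phi_{\delta(c_i^{(2)})}(x,y)$, so $(a',b')$ is a deterministic function of $(a,b)$ and the chosen key pair. The main obstacle --- the essence of the ``two parallel sequences of encryption keys, coupled in a special way'' alluded to above --- is the selection of $(c_i^{(1)}, c_i^{(2)})$ so that the induced pairing $(a,b) \mapsto (a',b')$ never maps $A \times B$ back into itself. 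The natural candidate is to take $c_i^{(2)}$ related to $c_i^{(1)}$ by the duality $c \mapsto c^\vee$ of Section~3, or by a single perturbing step inside the graph structure of $\Cr(\mu_i)$, and to track the resulting shift of decoding keys through Proposition~\ref{prop:Duality}(3). Verifying that this coupling precludes collisions uniformly in $A$ and $B$ is the delicate combinatorial heart of the construction, and I expect this to be where the bulk of the work lies.
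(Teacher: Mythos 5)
Your reduction of the theorem to the existence of an injection of $A\times B\times\{0,1\}$ into $M\times M$, together with the diameter bound $\mathrm{d}(A,B)\le n-1$, is exactly the paper's strategy, and your handling of the case $\mathrm{d}(A,B)=0$ is correct. But the proof is not complete: the step you defer as ``the delicate combinatorial heart of the construction'' is precisely the content of the theorem, and what you offer in its place does not work as stated. Choosing two \emph{distinct} keys $c_i^{(1)}\ne c_i^{(2)}$ guarantees only that each $\Phi^{(j)}$ is injective; it gives no control over the induced pairing $(a,b)\mapsto(a',b')=\Phi_{\delta(c_i^{(2)})}(\Phi_{c_i^{(1)}}(a,b))$, which is a bijection of the class $\{(a,b):a^{-1}b\in\vec{C}_{\mu_i}\}$ onto itself and can perfectly well map $C_i$ into $C_i$ (for instance when $C_i$ is all of that class), in which case the two images coincide rather than being disjoint. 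Your ``natural candidate'' $c_i^{(2)}=(c_i^{(1)})^\vee$ also does not obviously help, because $\delta(c^\vee)$ has no clean relation to $\delta(c)$: the decoded pair $(a',b')=(\varphi_{\delta(c^\vee)}(x,y),\varphi_{\delta(c^\vee)^\vee}(x,y))$ cannot be matched against $(a,b)=(\varphi_{\delta(c)}(x,y),\varphi_{\delta(c)^\vee}(x,y))$ coordinate by coordinate.

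The missing idea is to reverse-engineer the second key from the desired \emph{decryption} key rather than applying duality to the encryption key. Since $\delta$ is an involution (Proposition \ref{prop:Decoding}) and $\Cr(\mu)$ is closed under duality (Proposition \ref{prop:Closed}), one may set $\tilde c_i:=\delta(\delta(c_i)^\vee)$, which forces $\delta(\tilde c_i)=\delta(c_i)^\vee$. Then a cross-collision $\Phi_{c_i}(a,b)=\Phi_{\tilde c_i}(a',b')=(x,y)$ decodes to
\begin{equation*}
(a,b)=\bigl(\varphi_{\delta(c_i)}(x,y),\,\varphi_{\delta(c_i)^\vee}(x,y)\bigr),
\qquad
(a',b')=\bigl(\varphi_{\delta(c_i)^\vee}(x,y),\,\varphi_{\delta(\tilde c_i)^\vee}(x,y)\bigr),
\end{equation*}
so that $a'=b$, which is impossible when $A$ and $B$ are disjoint. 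This is the only point where the hypothesis $\mathrm{d}(A,B)>0$ enters, and without some such coupling the disjointness of the two images simply cannot be established uniformly in $A$ and $B$. Until you supply this (or an equivalent) choice of the second key system and the accompanying verification, the argument has a genuine gap at its core.
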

    	
    	\begin{proof}
        	First note that if $\mathrm{d}(A,B)=0$, the 
        	claimed inequality degenerates to the flat
        	Brunn-Minkowski inequality, which we have
        	already proved.  We thus assume that $A$ and
        	$B$ are disjoint.
        	
        	Suppose that there exists an injection
        	
        	\begin{equation*}
        	\Phi:A \times B \times \{0,1\} \longrightarrow
        	M \times M.
        	\end{equation*}
        	
        	\noindent
        	Then
        	
        	\begin{equation*}
            	|M|^2  \geq 2|A| |B|,
        	\end{equation*}
        	
        	\noindent
        	and taking logs this becomes
        	
        	\begin{equation*}
            	\log |M| \geq 
            	\frac{1}{2}\log|A| + \frac{1}{2}\log|B|
            	+ \frac{\log 2}{2}.
        	\end{equation*}

        \noindent
        The desired inequality now follows from
        the fact that the diameter of $S(n)$ is
        $n-1$.
        
        In order to construct $\Phi$ as required,
        let $\mu_1,\dots,\mu_m$
        and $C_1,\dots,C_m$ be as in the proof of
        Theorem \ref{thm:BrunnMinkowski}.  Choose
        a system of encryption keys
        
        \begin{equation*}
            c_i \in \Cr(\mu_i), \quad 1 \leq i \leq m,
        \end{equation*}
        
        \noindent
        and consider a second system of encryption
        keys obtained from the first system by
        setting
        
        \begin{equation*}
            \tilde{c}_i := \delta(\delta(c_i)^\vee),
            \quad 1 \leq i \leq m.
        \end{equation*}
        
        \noindent
        The keys $\tilde{c}_i$ are defined in this way
        so that, by the involutive property of $\delta$, 
        their corresponding decryption keys are
        the duals of the decryption keys of the first system:
        
        \begin{equation}
        \label{eqn:DerivedCrossovers}
            \delta(\tilde{c}_i) = \delta(c_i)^\vee.
        \end{equation}

        We build $\Phi$ from the above data
        as follows.  First, partition
        $A \times B \times \{0,1\}$ into
        the $2m$ sets
        
        \begin{equation*}
            C_i^{(j)} = \{ (a,b,j) : a^{-1}b \in C_i\}, \quad
            1 \leq i \leq m,\ 0 \leq j \leq 1.
        \end{equation*}
        
        \noindent
        We define $\Phi$ by declaring its restriction 
        to $C_i^{(j)}$ to be
        
        \begin{equation*}
            \Phi(a,b,j) := \begin{cases}
            \Phi_{c_i}(a,b), \text{ if } j=0 \\
            \Phi_{\tilde{c}_i}(a,b), \text{ if } j=1
            \end{cases}
        \end{equation*}

        We now prove that $\Phi$ so defined is an
        injection.  Suppose
        
        \begin{equation*}
            (x,y) \in M \times M,\quad
            (a,b,j) \in C_i^{(j)},\quad 
            (a',b',j') \in C_{i'}^{(j')} 
        \end{equation*}
            
        \noindent
        are such that
        
        \begin{equation*}
           (x,y) = \Phi(a,b,j) = \Phi(a',b',j').
        \end{equation*}
        
        \noindent
        Then, by the first
        part of Proposition \ref{prop:Duality}, 
        we must have $i=i'$.  We claim that also
        $j=j'$.  If not, then (relabelling if neccessary)
        we have
        
        \begin{equation*}
            (x,y) = \Phi(a,b,0) = \Phi(a',b',1),
        \end{equation*}
        
        \noindent
        so that
        
        \begin{equation*}
            (x,y) = \Phi_{c_i}(a,b) = \Phi_{\tilde{c}_i}(a',b').
        \end{equation*}
        
        \noindent
        Decoding, we obtain
        
        \begin{equation*}
            (a,b) = \Phi_{\delta(c_i)}(x,y)
            =(\varphi_{\delta(c_i)}(x,y),
            \varphi_{\delta(c_i)^\vee}(x,y))
        \end{equation*}
        
        \noindent
        and, using \eqref{eqn:DerivedCrossovers},
        
        \begin{equation*}
            (a',b') = \Phi_{\delta(\tilde{c}_i)}(x,y)
            =(\varphi_{\delta(c_i)^\vee}(x,y),
            \varphi_{\delta(\tilde{c}_i)^\vee}(x,y)).
        \end{equation*}
        
        \noindent
        This implies $a'=b$, which is impossible
        since $A$ and $B$ are disjoint.

        There are now two cases:
        $j=j'=0$ and $j=j'=1$.  In the first
        case, we have 
        
        \begin{equation*}
             \Phi_{c_i}(a,b)= \Phi(a,b,0) =\Phi(a',b',0)
            =\Phi_{c_i}(a',b'),
        \end{equation*}
        
        \noindent
        whence
        
        \begin{equation*}
            (a,b) = \Phi_{\delta(c_i)}(\Phi_{c_i}(a,b)) 
            =\Phi_{\delta(c_i)}(\Phi_{c_i}(a',b')) =
            (a',b').
        \end{equation*}

     \noindent
     In the second case, we have 
        
        \begin{equation*}
             \Phi_{\tilde{c}_i}(a,b) =\Phi(a,b,1) =\Phi(a',b',1)
            =\Phi_{\tilde{c}_i}(a',b'),
        \end{equation*}
        
        \noindent
        whence
        
        \begin{equation*}
            (a,b) = 
            \Phi_{\delta(\tilde{c}_i)}(\Phi_{\tilde{c}_i}(a,b)) 
            =\Phi_{\delta(\tilde{c}_i)}
            (\Phi_{\tilde{c}_i}(a',b')) =
            (a',b').
        \end{equation*}

    	\end{proof}
	
	\subsection{With an optimal curvature term}
	We conjecture that the curved Brunn-Minkowski inequality
	proved in Theorem \ref{thm:CurvedBrunnMinkowski}
	can be improved to the following optimal statement.
	
	\begin{conj}
	\label{conj:Optimal}
	For any nonempty $A,B \subseteq S(n)$, we have
        	
        	\begin{equation*}
            	\log |M| \geq \frac{1}{2}\log|A| + \frac{1}{2}\log|B|
            	+ \frac{K}{8}\mathrm{d}(A,B)^2
        	\end{equation*}
        	
        	\noindent
        	where $M$ is the midpoint set of $A$ and $B$,
        	
        	\begin{equation*}
            	K = \frac{c}{n-1},
        	\end{equation*}
        	
        \noindent
        and $c$ is a positive constant.
    \end{conj}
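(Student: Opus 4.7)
The plan is to generalize the two-key coupling of Theorem \ref{thm:CurvedBrunnMinkowski} to a family of roughly $\exp(c\,\mathrm{d}(A,B)^2/(n-1))$ coupled encryption keys per conjugacy class, producing an injection
\begin{equation*}
A \times B \times E \longrightarrow M \times M
\end{equation*}
with $|E| \geq \exp(c\,\mathrm{d}(A,B)^2/(n-1))$. Taking logarithms and using $\mathrm{d}(A,B) \leq n-1$ then yields the desired $K = c/(n-1)$. The construction should proceed exactly as in Section 4: for each class $C_i$, choose a family $\{c_i^{(s)}\}_{s \in E}$ of $\mu_i$-crossovers, and define $\Phi(a,b,s) = \Phi_{c_i^{(s)}}(a,b)$ on $C_i \times E$.

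The injectivity requirement dictates how the keys must be coupled. Same-index injectivity follows verbatim from Proposition \ref{prop:Decoding}. For a cross-index collision $\Phi_{c_i^{(s)}}(a,b) = \Phi_{c_i^{(s')}}(a',b')$ with $s \neq s'$, one decodes with $\delta(c_i^{(s)})$ on one side and $\delta(c_i^{(s')})$ on the other, obtaining algebraic relations (analogous to those that appear in the proof of Theorem \ref{thm:CurvedBrunnMinkowski}) that express $a'$ and $b'$ as $a$ and $b$ modified by $u$-conjugates of group elements built from the pair $c_i^{(s)}$, $c_i^{(s')}$. One wants these modifications to always move $a'$ at least $\mathrm{d}(A,B)$ transpositions away from $A$ and simultaneously $b'$ at least $\mathrm{d}(A,B)$ away from $B$, so that the collision forces $(a',b') \notin A \times B$. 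This is the role of the hypothetical concentration inequality: it would assert the existence, for each composition $\mu$, of a family in $\Cr(\mu)$ of the required exponential size whose pairwise cross-decoding displacements are uniformly large in transposition norm.

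The concentration inequality itself should be regarded as the symmetric-group analogue of \cite[Corollary 6]{OV}. In the hypercube case the sharp $1/N$ rate comes from Rademacher concentration together with the independence of coordinates. In the symmetric group, the analogous statement should reflect the well-known fact that the mixing scale of the random-transposition Markov chain, and correspondingly the natural curvature scale, is $1/(n-1)$ rather than $1/(n-1)^2$. A natural formulation is in terms of an isoperimetric or log-Sobolev bound for the uniform distribution on $\Cr(\mu)$ under the joint action of $\delta$ and duality.

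The main obstacle is establishing this concentration inequality. The set $\Cr(\mu)$ is combinatorially rich but fundamentally non-abelian, and the duality and decoding operations $c \mapsto c^\vee$ and $c \mapsto \delta(c)$ do not admit a clean coordinatewise description the way Hamming coordinates do on the hypercube. A reasonable strategy is to first handle the case $\mu = (n)$, where crossovers correspond to noncrossing partitions of $\{1,\ldots,n\}$ and the duality map is closely related to Kreweras complementation; here the combinatorics of noncrossing partitions and the representation theory of $S(n)$ both offer candidate tools for producing large antichains of mutually well-separated crossovers. The general case of an arbitrary composition $\mu$ should then follow from the single-cycle case by a product argument that factorizes over the cycles of $p_\mu$.
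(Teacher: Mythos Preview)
The statement you are attempting is labeled a \emph{conjecture} in the paper, and the paper does not prove it; the authors explicitly write that they are ``at present unable to prove'' it. What the paper \emph{does} do is show (Theorem~\ref{thm:ConditionalOptimal}) that the conjecture follows from a hypothetical concentration inequality on $\Cr(\mu)$ (Conjecture~\ref{conj:Concentration}). Your proposal is in exactly the same logical position: it is not a proof, and you correctly identify the missing piece as a concentration statement for crossovers. So there is no genuine error to flag, but there is no proof either---on either side.

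That said, your conditional strategy differs from the paper's in an instructive way. You propose a \emph{static} scheme: for each ordered cycle type $\mu_i$, exhibit a family $\{c_i^{(s)}\}_{s\in E}$ of $|E|\approx\exp(c\,\mathrm{d}(A,B)^2/(n-1))$ pairwise ``cross-decoding separated'' crossovers, then build an honest injection $A\times B\times E\hookrightarrow M\times M$. The paper instead uses an \emph{adaptive} (or averaging) scheme: it enriches $A\times B$ to $U=\bigsqcup_i C_i\times\Cr(\mu_i)$, maps all of $U$ to $M\times M$ by $(a,b,c)\mapsto\Phi_c(a,b)$, and then bounds the fibre over each $(x,y)\in M\times M$ by observing that the preimage corresponds to a set $D(x,y)\subseteq\Cr(\mu_i)$ with $\mathrm{d}(D(x,y),D(x,y)^\vee)\geq\mathrm{d}(A,B)$, to which Conjecture~\ref{conj:Concentration} applies directly. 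The paper's route is cleaner: it avoids having to \emph{construct} a large well-separated family inside $\Cr(\mu)$ and instead only needs the upper bound on the density of any such set. Your packing formulation and the paper's measure-concentration formulation are morally dual, but the latter is both easier to state and easier to plug into the fibre-counting argument.

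One further caution about your version: for the cross-index collision analysis you want that decoding with mismatched keys forces $(a',b')\notin A\times B$. In the two-key proof of Theorem~\ref{thm:CurvedBrunnMinkowski} this worked because the specific coupling $\tilde c_i=\delta(\delta(c_i)^\vee)$ yields exactly $a'=b$, contradicting disjointness. For a general family $E$ you would need, for every pair $s\neq s'$, that the induced displacement has transposition length at least $\mathrm{d}(A,B)$; this is a genuinely stronger combinatorial requirement than merely having many keys, and making it precise essentially reproduces Conjecture~\ref{conj:Concentration} in contrapositive form.
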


    While we are at present unable to prove
    Conjecture \ref{conj:Optimal}, we show 
    here that it is implied by the following
    conjectural concentration inequality.
    
    \begin{conj}
    \label{conj:Concentration}
        There exists a positive constant $\varepsilon > 0$
        such that, for any $\mu \vDash n$, we have
        
        \begin{equation*}
        \mathrm{d}(C,C^\vee) \geq r \implies
        \mathbb{P}(C)
        \leq e^{-\frac{\varepsilon r^2}{n-\ell(\mu)}},
        \end{equation*}
        
        \noindent
        where $\mathbb{P}$ is the uniform probability
        measure on $\Cr(\mu)$.
    \end{conj}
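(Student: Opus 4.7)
The plan is to follow the template of Ollivier--Villani's concentration argument \cite[Corollary 6]{OV} by combining the measure-preserving property of duality with a concentration of measure estimate on $\Cr(\mu)$. By Proposition \ref{prop:Closed}, $c \mapsto c^\vee$ is a bijection on $\Cr(\mu)$, so it preserves the uniform measure and $\mathbb{P}(C^\vee) = \mathbb{P}(C)$. The hypothesis $\mathrm{d}(C, C^\vee) \geq r$ then implies that the $\lfloor r/2 \rfloor$-neighborhood of $C$ (in the Cayley metric restricted to $\Cr(\mu)$) is disjoint from $C^\vee$, yielding $\mathbb{P}(C^{(r/2)}) \leq 1 - \mathbb{P}(C)$. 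The conjecture will follow from a Gaussian concentration inequality of the form $\mathbb{P}(A^{(t)}) \geq 1 - e^{-ct^2/(n-\ell(\mu))}$ for any $A \subseteq \Cr(\mu)$ with $\mathbb{P}(A) \geq 1/2$, applied to a suitable enlargement.

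To establish the required concentration on $\Cr(\mu)$ with the correct dimension dependence, I would reduce to a single-cycle case using the near-product structure of $\Cr(\mu)$. Any crossover $c$ factorizes as $c = c_1 \cdots c_{\ell(\mu)}$, where each $c_i$ is supported on the $i$-th cycle of $p_\mu$; the Cayley distance is additive across factors, $\mathrm{d}(c, c') = \sum_i \mathrm{d}_i(c_i, c'_i)$; and duality acts coordinatewise. The only obstruction to a clean product decomposition is the midpoint-balance constraint $|\mathrm{d}(e, c) - \mathrm{d}(c, p_\mu)| \leq 1$, which ties the coordinates together; this can be handled by a slicing argument, or by observing that passing to the larger set of geodesic midpoints (without the balance constraint) alters $|\Cr(\mu)|$ by at most a polynomial in $n$, which costs only an $O(\log n)$ additive term in $r$. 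With this reduction in hand, tensorization of concentration reduces the problem to proving the inequality on $\Cr((k))$ with dimension parameter $k-1$.

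For $\mu=(k)$, I would identify $\Cr((k))$ with the set of noncrossing partitions of $[k]$ (each crossover is a product of forward subcycles inducing such a partition), under which duality coincides with the classical Kreweras complement. I would then construct a martingale on a uniformly chosen noncrossing partition by exposing its nested-arch structure in depth-first order along the associated tree, producing a filtration of length $k-1$. Applying the Azuma--Hoeffding inequality to the $1$-Lipschitz function $f(c) = \mathrm{d}(c, C)$ would yield $\mathbb{P}(|f - \mathbb{E} f| \geq t) \leq 2 e^{-c't^2/(k-1)}$; combined with the half-space bound from the first paragraph, this implies the claimed $e^{-\varepsilon r^2/(k-1)}$ decay.

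The principal obstacle is verifying that the martingale differences in this construction are bounded by a universal constant. Altering a single arch in a noncrossing partition can in principle change the word metric $\mathrm{d}(c, c')$ by much more than $O(1)$, since an arch swap may merge or split cycles of $c^{-1}c'$ whose lengths are comparable to the arch itself. Overcoming this requires either an especially judicious filtration---one in which each newly revealed arch contributes only bounded increment to the word distance regardless of previously revealed arches---or an intrinsic concentration estimate derived from a Bakry--\'Emery curvature bound (or modified log-Sobolev inequality) for a natural Markov chain on $\Cr((k))$, such as an arch-flip or Kreweras-rotation dynamics. Establishing this bounded-differences property, or equivalently proving the needed spectral gap for such a chain on noncrossing partitions, is the technical heart of the conjecture and the step most likely to require genuinely new ideas.
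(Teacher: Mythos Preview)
The statement you are attempting to prove is labeled in the paper as a \emph{Conjecture}, not a theorem; the paper offers no proof. The only justification the authors provide is the remark that when every part of $\mu$ lies in $\{1,2\}$, the conjecture reduces (via the hypercube embedding described in the Introduction) to Ollivier--Villani's Corollary~6. Beyond that special case the statement is left open, and indeed the paper's Theorem~\ref{thm:ConditionalOptimal} is explicitly conditional on it.

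Your proposal is therefore not comparable to a proof in the paper, because there is none. As a strategy it is reasonable in outline---the measure-preservation of duality, the product/tensorization reduction to a single cycle, and the identification of $\Cr((k))$ with noncrossing partitions under Kreweras complementation are all correct observations---but you yourself concede the essential gap: you have no verified bounded-differences filtration and no curvature or log-Sobolev bound for a suitable chain on $\mathrm{NC}(k)$. Absent one of these ingredients, the Azuma step does not go through, and the proposal remains a plan rather than a proof. This is consistent with the paper's own assessment that the inequality is conjectural.
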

    
    	\begin{remark}
		In the case where $\mu=(\mu_1,\mu_2,\dots)$ 
		satisfies $\mu_i \in \{1,2\}$, Conjecture \ref{conj:Concentration}
		is true --- via the embedding of the 
		hypercube described in the Introduction,
		it is equivalent to Corollary 6 in \cite{OV}.
	\end{remark}

        We now explain how 
        Conjecture \ref{conj:Optimal} can be deduced
        from Conjecture \ref{conj:Concentration}.  
        This argument, which lifts the proof of 
        \cite[Theorem 1]{OV} from the hypercube to the symmetric group, 
        differs substantially
        from the proofs of Theorems \ref{thm:BrunnMinkowski}
        and \ref{thm:CurvedBrunnMinkowski}.  To prove
        these coarser results, we used static encoding
        to construct our injections,
        i.e. a predetermined list of encryption keys.  
        Here, we use an adaptive coding scheme in which
        all crossovers associated to 
        $A \times B$ are employed.

    \begin{thm}
    \label{thm:ConditionalOptimal}
        Suppose that Conjecture \ref{conj:Concentration}
        is true.  Then, for any nonempty $A,B \subseteq S(n)$, we have
        	
        	\begin{equation*}
            	\log |M| \geq \frac{1}{2}\log|A| + \frac{1}{2}\log|B|
            	+ \frac{K}{8}\mathrm{d}(A,B)^2
        	\end{equation*}
        	
        	\noindent
        	where $M$ is the midpoint set of $A$ and $B$
        	and
        	
        	\begin{equation*}
            	K = \frac{4\varepsilon}{n-1}.
        	\end{equation*}
    \end{thm}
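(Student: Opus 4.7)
The plan is to lift Ollivier--Villani's adaptive coding argument to the symmetric group by double counting encoding triples $(a,b,c)$ where now every crossover in $\Cr(\mu)$ is employed. For each composition $\mu \vDash n$, set
\begin{equation*}
T_\mu := \{(a,b) \in A \times B : a^{-1}b \in \vec{C}_\mu\}, \qquad S_\mu := \{(x,y) \in M \times M : x^{-1}y \in \vec{C}_\mu\}.
\end{equation*}
The $T_\mu$ partition $A \times B$, and by Proposition \ref{prop:Duality}(1) the $S_\mu$ are pairwise disjoint subsets of $M \times M$. I would then consider the encoding map
\begin{equation*}
\Psi_\mu : T_\mu \times \Cr(\mu) \longrightarrow S_\mu, \qquad (a,b,c) \mapsto \Phi_c(a,b).
\end{equation*}
By Proposition \ref{prop:Decoding}, the fiber of $\Psi_\mu$ over $(x,y) \in S_\mu$ is $D_\mu(x,y) := \{c \in \Cr(\mu) : \Phi_{\delta(c)}(x,y) \in A \times B\}$, yielding $|T_\mu|\,|\Cr(\mu)| = \sum_{(x,y) \in S_\mu} |D_\mu(x,y)|$.

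The key step is to bound the fiber $|D_\mu(x,y)|$ via Conjecture \ref{conj:Concentration}. Reparameterizing by $d = \delta(c)$, which is an involution on $\Cr(\mu)$, $|D_\mu(x,y)|$ equals the cardinality of
\begin{equation*}
E_\mu(x,y) := \{d \in \Cr(\mu) : \varphi_d(x,y) \in A,\ \varphi_{d^\vee}(x,y) \in B\}.
\end{equation*}
Because the parameterization $c \mapsto \varphi_c(x,y) = x u_{x^{-1}y} c u_{x^{-1}y}^{-1}$ is an isometry of $\Cr(\mu)$ onto $M(x,y)$, for any $d, d' \in E_\mu(x,y)$ we obtain
\begin{equation*}
\mathrm{d}(d, (d')^\vee) = \mathrm{d}(\varphi_d(x,y), \varphi_{(d')^\vee}(x,y)) \geq \mathrm{d}(A, B) =: r,
\end{equation*}
since $\varphi_d(x,y) \in A$ and $\varphi_{(d')^\vee}(x,y) \in B$. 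Consequently $\mathrm{d}(E_\mu, E_\mu^\vee) \geq r$, and Conjecture \ref{conj:Concentration} delivers
\begin{equation*}
|E_\mu(x,y)| \leq |\Cr(\mu)|\,\exp\!\left(-\frac{\varepsilon r^2}{n - \ell(\mu)}\right).
\end{equation*}

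Substituting this into the double-counting identity gives $|S_\mu| \geq |T_\mu|\,\exp(\varepsilon r^2/(n-\ell(\mu)))$. Whenever $T_\mu$ is nonempty (and $r > 0$; the case $r = 0$ reduces to Theorem \ref{thm:BrunnMinkowski}), any $(a,b) \in T_\mu$ satisfies $\mathrm{d}(a,b) = n - \ell(\mu) \geq r$, hence $n - \ell(\mu) \in [r, n-1]$ and the exponent is at least $\varepsilon r^2/(n-1)$. Summing over $\mu$, together with the disjointness, produces
\begin{equation*}
|M|^2 \geq \sum_\mu |S_\mu| \geq |A|\,|B|\,\exp\!\left(\frac{\varepsilon r^2}{n-1}\right),
\end{equation*}
and taking logarithms gives the claimed inequality, since $K/8 = \varepsilon/(2(n-1))$.

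The main obstacle I expect is identifying the correct subset of $\Cr(\mu)$ to feed into Conjecture \ref{conj:Concentration}. The raw fiber $D_\mu(x,y)$ of $\Psi_\mu$ intertwines the $A$- and $B$-roles through the decoding involution $\delta$, so the dual separation is not directly visible on it. Only after one conjugates by $\delta$ to pass to $E_\mu(x,y)$ does the isometry $\varphi_\bullet(x,y)$ cleanly transport the geometric separation of $A$ and $B$ in $S(n)$ into the required separation of $E_\mu$ from its dual $E_\mu^\vee$ inside $\Cr(\mu)$; once this translation is in place, the conjectural concentration inequality does the rest exactly as in \cite{OV}.
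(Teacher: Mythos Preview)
Your argument is correct and matches the paper's proof essentially line for line: the sets $T_\mu, S_\mu$ are the paper's $C_i, V_i$, your encoding map $\Psi_\mu$ is the paper's $\Phi$, and your reparameterized fiber $E_\mu(x,y)$ is exactly the paper's $D(x,y)$ (the paper performs the $d=\delta(c)$ substitution implicitly when it identifies the fiber with $\{d:\Phi_d(x,y)\in C_i\}$). The only cosmetic difference is that you spell out the intermediate set $D_\mu(x,y)$ before applying $\delta$, whereas the paper passes to the reparameterized set in one step.
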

    
    \begin{proof}
        Let $\mu_1,\dots,\mu_m$ and $C_1,\dots,C_m$
        be as in the proof of Theorem \ref{thm:BrunnMinkowski}.
        Put
        
            \begin{equation*}
                U_i := C_i \times \Cr(\mu_i), \quad 1\leq 
                i \leq m.
            \end{equation*}
            
        \noindent
        These sets are pairwise disjoint, but note that they 
        are not subsets of $A \times B$.
        Rather,
        
        \begin{equation*}
            U := \bigsqcup_{i=1}^m U_i
        \end{equation*}
        
        \noindent
        is an ``enriched'' version of
        $A \times B$ in which each pair $(a,b) \in C_i$
        appears with multiplicity $|\Cr(\mu_i)|$.
        
        Consider the map
        
        \begin{equation*}
            \Phi: U \longrightarrow M \times M
        \end{equation*}
        
        \noindent
        defined by
        
        \begin{equation*}
            \Phi(a,b,c) := \Phi_c(a,b).
        \end{equation*}
        
        \noindent
        By Proposition \ref{prop:Duality}, we know that
        the image of $U_i$ under $\Phi$ is contained in
        
        \begin{equation*}
            V_i := \{ (x,y) \in M \times M : x^{-1}y \in
            \vec{C}_{\mu_i}\}.
        \end{equation*}
        
        \noindent
        The sets $V_1,\dots,V_m$ are disjoint, and
        
        \begin{equation*}
         \sum_{i=1}^m |V_i| \leq |M \times M|.
        \end{equation*}

        Fix an arbitrary $i \in \{1,\dots,m\}$,
        and an arbitrary pair of midpoints $(x,y) \in V_i$.
        By definition, $(a,b,c) \in U_i$ maps to
        $(x,y)$ under $\Phi$ if and only if
        
        \begin{equation*}
            \Phi_c(a,b) = (x,y),
        \end{equation*}
        
        \noindent
        which by Proposition \ref{prop:Decoding}
        is equivalent to
        
        \begin{equation*}
            \Phi_{\delta(c)}(x,y) = (a,b).
        \end{equation*}
        
        \noindent
        Thus the cardinality of the fibre of $\Phi$
        over $(x,y)$ agrees with that of the set
        
        \begin{equation*}
            D(x,y) := \{d \in \Cr(\mu_i) : \Phi_d(x,y) \in C_i \}.
        \end{equation*}

        \noindent
          Now let $d_1,d_2 \in D(x,y)$.  
          Then, by definition,
          
          \begin{equation*}
              \varphi_{d_1}(x,y) \in A, \quad 
              \varphi_{d_2^\vee}(x,y) \in B.
          \end{equation*}
          
          \noindent
          Because the map
          
          \begin{equation*}
              d \mapsto \varphi_d(x,y)
          \end{equation*}
          
          \noindent
          is an isometry, this implies

          \begin{equation*}
              \mathrm{d}(d_1,d_2^\vee) =
              \mathrm{d}(\varphi_{d_1}(x,y),\varphi_{d_2^\vee}(x,y))
              \geq \mathrm{d}(A,B),
          \end{equation*}
          
          \noindent
          and hence 
          
          \begin{equation*}
              \mathrm{d}(D(x,y),D(x,y)^\vee) 
              \geq \mathrm{d}(A,B).
          \end{equation*}

         \noindent
         Hence, assuming Conjecture \ref{conj:Concentration},
         we have that
         
         \begin{equation*}
             |D(x,y)| \leq 
             e^{-\frac{\varepsilon \mathrm{d}(A,B)^2}{n-\ell(\mu_i)}}
             |\Cr(\mu_i)| \leq 
             e^{-\frac{\varepsilon \mathrm{d}(A,B)^2}{n-1}}
             |\Cr(\mu_i)|.
         \end{equation*}
         
         \noindent
         Summing over all $(x,y) \in V_i$ we thus obtain
         
         \begin{equation*}
             |U_i| \leq 
             e^{-\frac{\varepsilon \mathrm{d}(A,B)^2}{n-1}}
             |\Cr(\mu_i)| |V_i|.
         \end{equation*}
         
         \noindent
         Since $|U_i| = |C_i| |\Cr(\mu_i)|$, this implies
         
         \begin{equation*}
             |C_i| \leq 
             e^{-\frac{\varepsilon \mathrm{d}(A,B)^2}{n-1}}
             |V_i|.
         \end{equation*}
         
         \noindent
         Summing over $1 \leq i \leq m$, we get
         
         \begin{equation*}
             \sum_{i=1}^m|C_i| \leq 
             e^{-\frac{\varepsilon \mathrm{d}(A,B)^2}{n-1}}
             \sum_{i=1}^m|V_i|,
         \end{equation*}
         
         \noindent
         whence
         
         \begin{equation*}
             |A\times B| \leq 
             e^{-\frac{\varepsilon \mathrm{d}(A,B)^2}{n-1}}
             |M \times M|.
         \end{equation*}
         
         \noindent
         Taking logs, we obtain the curved Brunn-Minkowski
         inequality with
         
         \begin{equation*}
            	K = \frac{4\varepsilon}{n-1}.
        	\end{equation*}

    \end{proof}

\bibliographystyle{amsplain}

\end{document}